 \newtheorem{thm}{Theorem}[section]
 \newtheorem{cor}[thm]{Corollary}
 \newtheorem{lem}[thm]{Lemma}
 \newtheorem{prop}[thm]{Proposition}
 \theoremstyle{definition}
 \newtheorem{defn}[thm]{Definition}
 \theoremstyle{remark}
 \newtheorem*{ex}{Example}
 \numberwithin{equation}{section}
\begin{document}
%-------------------------------------------------------------------------
% editorial commands: to be inserted by the editorial office
%
%\firstpage{1} \volume{228} \Copyrightyear{2004} \DOI{003-0001}
%
%
%\seriesextra{Just an add-on}
%\seriesextraline{This is the Concrete Title of this Book\br H.E. R and S.T.C. W, Eds.}
%
% for journals:
%
%\firstpage{1}
%\issuenumber{1}
%\Volumeandyear{1 (2004)}
%\Copyrightyear{2004}
%\DOI{003-xxxx-y}
%\Signet
%\commby{inhouse}
%\submitted{March 14, 2003}
%\received{March 16, 2000}
%\revised{June 1, 2000}
%\accepted{July 22, 2000}
%
%
%
%---------------------------------------------------------------------------
%Insert here the title, affiliations and abstract:
%

\title[Resolution for CPPL]
 {Resolution for Constrained Pseudo-Propositional Logic}

%----------Author 1
\author[Ahmad-Saher Azizi-Sultan]{Ahmad-Saher Azizi-Sultan}

\address{P.O. Box 30098\\
Taibah University\\
Medina Munawarah\\
Saudi Arabia}

\email{sultansaher@hotmail.com}

%\thanks{This work was completed with the support of our
%\TeX-pert.}

%----------Author 2
%----------classification, keywords, date
\subjclass{Primary 03B05; Secondary 03B22, 03F03, 05A05}

\keywords{Propositional logic,  resolution proof system,  pseudo-Boolean constraints, Constrained pseudo-propositional logic}

\date{   }
%----------additions

%\dedicatory{To my boss}

%%% ----------------------------------------------------------------------

\begin{abstract}
    This work, shows how propositional resolution can be generalized to obtain a resolution proof system for constrained pseudo-propositional logic (CPPL), which is an extension resulted from inserting the natural numbers with few constraints symbols into the alphabet of propositional logic and adjusting the underling language accordingly. Unlike the construction of CNF formulas which are restricted to a finite set of clauses, the extended CPPL does not require the corresponding set to be finite. 
    Although this restriction is made dispensable, this work presents a constructive proof showing that the generalized resolution for CPPL is sound and complete. As a marginal result, this implies that propositional resolution is also sound and complete for formulas with even infinite set of clauses.
\end{abstract}

%%% ----------------------------------------------------------------------
\maketitle
%%% ----------------------------------------------------------------------
%\tableofcontents

\section{Introduction}
The gradual improvements of SAT algorithms, developed over the last few decades,  have made them the ideal choice for approaching a wide range of important problems such as formal verification \cite{Biere:1999:SMC:309847.309942,Biere:1999:SMC:646483.691738,VELEV200373}, planning \cite{Kautz:1996:PEP:1864519.1864564, DBLP:books/lib/RussellN03}, scheduling \cite{DBLP:conf/aips/GomesSMT98} and so forth.
Generally, these algorithms are based on the resolution proof system, which is built within the framework of propositional logic where formulas are represented by their CNF. 

Naturally many important problems in different areas contain counting constraints.
Unfortunately the limitations of the expressive power of propositional language does not allow for counting tools to be built within the framework of the language itself.
Thus, in general, encoding counting constraints in CNF increases the number of  clauses and variables extensively \cite{DBLP:conf/sara/AavaniMT13, DBLP:conf/sat/BailleuxBR09}, resulting in impractical excessive complexity as  SAT is one of the canonical NP-complete problems \cite{DBLP:conf/stoc/Cook71}. This had motivated the development of CPPL, which evolved from propositional logic by interpolating the natural numbers with few constraint symbols into its alphabet and adjusting the underlying language accordingly \cite{DBLP:conf/cade/Azizi-Sultan18,DBLP:journals/lu/Azizi-Sultan20}.
The resulting evolved logic became capable of formulating counting constraints or SAT instances naturally and much more succinctly than having them being encoded using CNF. 
Moreover, its evolved proof system retained its soundness and completeness \cite{DBLP:journals/lu/Azizi-Sultan20}.
The inference rules involved are  listed in Table \ref{IRs}. 
From this table, It can be noticed that the list of inference rules is quite long and hence unfortunate for computerized implementation.
A proof system with fewer inference rules results in less cases to be considered and enhances its complexity.

Analogously to propositional resolution, this work is presenting a resolution proof system for CPPL, with only two rules involved, and showing that it is sound and complete.

\section{Background}
Consider the set of natural numbers, $\mathbb{N}$,  and let $\mathcal{P} = \{ p,p_1,p_2,\dots \}$ be a finite or countably infinite set of propositional variables.

\subsection{PPL}
The {\em alphabet}  $\mathcal A$ underling the language of PPL is the union $\mathbb{N} \cup \mathcal{P}  \cup  \left\lbrace  \neg, +, \left( , \right) \right\rbrace$,
where the symbols $\neg$, $+$, (, and ) resemble the negation, addition, opening and closing punctuation symbols, respectively.

The language of PPL is the set of  {\em formulas} $\mathcal{F}$  which is defined to be  the smallest set satisfying the two rules:
\begin{enumerate}
    \item[] (r1) $\mathcal{P} \subset \mathcal{F}$.
    \item[] (r2) If $n \in \mathbb{N}$ and the strings $\alpha$ and $\beta$ are formulas, then the strings $n (\alpha)$, $\neg \alpha$ , and $(\alpha + \beta)$  are also formulas.
\end{enumerate}

For the sake of simplicity and smoothing readability, the following additional definitions and notations are adopted:
\begin{enumerate}
    \item[$\bullet$]  Every formula which is a propositional variable is called {\em prime formula}.
    \item[$\bullet$] As in writing arithmetical terms, parenthesis are omitted whenever it is possible.
    \item[$\bullet$] A prime formula or its negation is called a \emph{literal} and denoted by $\lambda$.
    \item[$\bullet$] A formula of the form $n(\lambda)$, or simply $n \lambda$, is called \emph{pseudo-literal}.
    \item[$\bullet$] A formula which is a pseudo-literal or an addition of two or more pseudo-literals is said to be in \emph{normal form}.  
    \item[$\bullet$] A \emph{subformula} of a formula $\alpha$ is a substring occurring in $\alpha$ that is itself a formula. 
    \item[$\bullet$] Formulas will be denoted by $\alpha, \beta, \dots$, and sets of formulas by $F$ and $F'$, where these symbols may also be indexed. 
\end{enumerate}

Having defined formulas, it is time now to look at their meanings and semantics. 
A proposition can only have one of the \emph{truth values}, true or false.
In PPL, these values are represented by $(1,0)$ for true and $(0,1)$ for false.
Each of which can serve as a meaning for a prime formula. 
Furthermore, assigning a meaning to some formula $F \in \mathcal{F}$, is done by the aid of the following three functions:
\begin{itemize}
    \item Scalar multiplication: $\mathbb{N} \times \mathbb{Z}^2 \rightarrow \mathbb{Z}^2$ where $n(z,z') = (nz,nz') $.
    \item Negation $\neg^*:\mathbb{Z}^2 \rightarrow\mathbb{Z}^2$ where $\neg^*(z,z') =(z',z)$. 
    \item Addition\footnote{This addition is easily distinguished from the addition of formulas.} 
    $+:\mathbb{Z}^2 \times\mathbb{Z}^2 \rightarrow \mathbb{Z}^2$, where $+((n, m),(k, l)) = (n+k, m+l)$.
\end{itemize}

Yet, an {\em interpretation} $I$ is a subset of $\mathcal{P}$ represented by the mapping $I : \mathcal{F} \rightarrow \mathbb{Z}^2$ which is defined recursively as follows:   
\begin{itemize}
    \item[1)] {\bf Recursion base.} If $\alpha$ is a prime formula then $\alpha = p$ for some $p \in \mathcal{P}$. 
    The recursion base for this case reads 
    $$
    I(\alpha) = I(p) = 
    \begin{cases}
    \,(1,0) &\mbox{ if } p \in I,\\
    \,(0,1) &\mbox{ if } p \notin I.\\
    \end{cases}.
    $$
    \item[2)] {\bf Recursion steps.}\\
    $$
    I(\alpha) =
    \begin{cases}
    n\,(I(\beta))     &\text{ if $\alpha$ is of the form } n \, \beta, \mbox{ where } n \in \mathbb{N}, \\
    \neg^* (I(\beta)) &\mbox{ if $\alpha$ is of the form } \neg \beta,\\
    I(\beta_1)  + I(\beta_2) &\mbox{ if $\alpha$ is of the form } \beta_1 + \beta_2.\\
    \end{cases}
    $$
\end{itemize}

If two formulas have the same meaning they are called equivalent. 
More precisely, it is said that $\alpha$ and $\beta$ are \emph{equivalent}, in symbols $\alpha \equiv \beta$, iff for every interpretation $I$ we have $I(\alpha) = I(\beta)$.

\subsection{CPPL}
Given the set of constraint types $\mathcal{C} =  \{ >,\geq, = , <,\leq\}$, CPPL utilizes these constraint types additionally to the PPL formulas to build up its language.
The {\em{alphabet}} $\mathcal{A}_c$ underlying the language of CPPL is formed by the unions 
$\mathcal{F}  \cup  \mathcal{C}   \cup 
\left\lbrace  \sim, \left( , \; \right)\right\rbrace \cup \mathbb{N}$,
where $\sim$, (, and ) resemble the negation and opening and closing punctuation symbols, respectively.  

The set of {\em sentences} in CPPL, denoted $\mathcal{S}$, is the smallest set  satisfying the two rules: 
\begin{enumerate}
    \item[] (R1) Every triple $(\varphi, \bowtie, n) \in \mathcal{F} \times \mathcal{C} \times \mathbb{N}$  forms a sentence, denoted $\varphi^{(\bowtie,n)}$. 
    \item[] (R2) If the string $\Phi$ is a sentence, then the string $\sim (\Phi)$ is also a sentence.
\end{enumerate}

Sentences are denoted by the Greek letters A, B, $\Gamma$, \dots, and sets of sentences are indicated by $S$ and $S'$, where these symbols may also be indexed.
Moreover, parenthesis are omitted whenever it is possible.  Hence  $\sim (\Phi)$ is simply written as $\sim \Phi$.

It is said that an interpretation $I$ is a {\em model} for a sentence $\Phi = \varphi^{(\bowtie,n)}$, in symbols $I \models \Phi$ or $I \models \varphi^{(\bowtie,n)}$, iff $I(\varphi) = (m,l) $ with $m \bowtie n$.
It is also said $I$ is a model for $S$ or $I \models S$, iff $I \models \Phi$ for every sentence $\Phi \in S$.

A  direct consequence of this definition is the following proposition.

\begin{prop}
    \label{prop:sum_qs}
    Let $\varphi^{(\bowtie,\,n)}, \psi^{(\bowtie,\, m)} \in \mathcal{S}$. 
    If $I$ is an interpretation such that $I \models \varphi^{(\bowtie,\,n)}$ and $I \models \psi^{(\bowtie,\,m)}$, then $I \models (\varphi + \psi)^{(\bowtie,\,n+m)}$. 
\end{prop}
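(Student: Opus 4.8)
The plan is to unwind the definition of \emph{model} applied to the two hypotheses, apply the recursion step of $I$ governing the addition of formulas, and then invoke the monotonicity of integer addition with respect to each constraint type.

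First I would set $I(\varphi) = (a,b)$ and $I(\psi) = (c,d)$. By the definition of a model, the hypotheses $I \models \varphi^{(\bowtie,\,n)}$ and $I \models \psi^{(\bowtie,\,m)}$ say precisely that $a \bowtie n$ and $c \bowtie m$. Next, using the third clause of the recursion steps in the definition of an interpretation together with the coordinatewise definition of $+$ on $\mathbb{Z}^2$, I would compute
$I(\varphi + \psi) = I(\varphi) + I(\psi) = (a,b) + (c,d) = (a+c,\, b+d)$.
It then remains to verify $(a+c) \bowtie (n+m)$, after which the definition of \emph{model} applied to $(\varphi + \psi)^{(\bowtie,\,n+m)}$ yields the conclusion.

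To check $(a+c) \bowtie (n+m)$ I would run a short case analysis over $\bowtie \in \{>,\geq,=,<,\leq\}$: in each case this is the familiar fact that summing two valid inequalities (or equalities) of the same orientation produces a valid inequality (respectively equality) in $\mathbb{Z}$, i.e.\ $a \bowtie n$ and $c \bowtie m$ together imply $a+c \bowtie n+m$.

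I do not anticipate any real obstacle: the statement is essentially the observation that every relation in $\mathcal{C}$ is preserved under coordinatewise addition in $\mathbb{Z}^2$. The only points requiring a sliver of care are to keep the first coordinate as the one compared against $n$ (and $m$), matching the convention $I(\varphi)=(m,l)$ with $m \bowtie n$ in the definition of a model, and to note that the argument relies on both sentences carrying the \emph{same} constraint symbol $\bowtie$, since mixing orientations would break the step $a+c \bowtie n+m$.
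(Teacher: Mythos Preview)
Your proposal is correct and matches the paper's treatment: the paper does not spell out a proof at all, merely stating that the proposition is ``a direct consequence of this definition,'' and your argument is precisely the unwinding of that definition that the paper leaves implicit.
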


To this end, satisfiability and semantic equivalence in CPPL can be considered analogously to propositional logic.
\begin{itemize} 
    \item It is said that $\Phi$ (resp. $S$) is \emph{satisfiable} iff there exists an interpretation $I$ such that $I \models \Phi$ (resp. $I \models$ $S$).
    \item It is said that $\Phi$ (resp. $S$) is \emph{valid} or a \emph{tautology} iff for all interpretations $I$ we find that $I \models \Phi$ (resp. $I \models S$).
    \item It is said that $\Phi$ (resp. $S$) is \emph{unsatisfiable} iff for every interpretation $I$ we have $I \not \models \Phi$ (resp. $I \not \models$ $S$).  If $\Phi$ is unsatisfiable it will be abbreviated by $\bot$.
    \item  It is said that $S'$ is a {\em logical consequence} of $S$, denoted $S$ $\models$ $S'$, if $I \models$ $S$ $\Rightarrow I \models$ $S'$ for all interpretations $I$. 
    \item  It is said that $S$ and $S'$ are {\em semantically equivalent}, denote $S \equiv S'$,  if $S$ $\models$ $S'$ and $S'$ $\models$ $S$ 
\end{itemize}

Conventionally, $S \models \Phi$ (resp. $\Phi \models$ $S$) means $S$ $\models \{\Phi\}$ (resp. $\{\Phi\} \models S$).
More generally, $S \models\Phi_1,\Phi_2, \dots,\Phi_k$ (resp. $\Phi_1,\Phi_2, \dots,\Phi_k \models$  $S$) is written instead of  $S$ $\models \{\Phi_1,\Phi_2, \dots,\Phi_k\}$ (resp. \{$\Phi_1,\Phi_2, \dots,\Phi_k\} \models S$), and more briefly $S,\Phi \models \Psi$ is written instead of $S \cup \{ \Phi \} \models \Psi$.
Analogous notations are used regarding semantic equivalence.

According to these conventions, defining negation in CPPL becomes straightforward.
Let  $\Phi , \Psi \in \mathcal{S}$. If for any interpretation $I$ we have $I \models \Phi  \iff  I \not \models \Psi$ and $I \not \models\Phi \iff  I \models \Psi$, then it is said that $\Psi$ is the {\em negation}  of $\Phi$ and abbreviated  by $\sim \Phi = \Psi$. 
Thus,  $I \models \Phi \iff I \not  \models \sim \Phi \iff I  \models \sim \sim \Phi$. That is $\Phi \equiv \sim \sim \Phi$.

Under the presented terminology, the following theorems can be proven (see \cite{DBLP:journals/lu/Azizi-Sultan20}). 

\begin{thm}[Resolution theorem]
    \label{theo:Resolution}
    If $\varphi \in \mathcal F$ and $n > m' \geq m$, then the following consequence holds:
    $$(\varphi + m' p + m(\neg p))^{(\bowtie,\, n)} \models (\varphi+(m'-m)p)^{(\bowtie,\, n-m)}.$$
\end{thm}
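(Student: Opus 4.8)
The plan is to argue directly from the semantics, by a case analysis on whether the variable $p$ lies in the interpretation. First I would dispose of the only syntactic point: since $n > m' \geq m$, the quantities $n - m$ and $m' - m$ are natural numbers (indeed $n - m > 0$), so the pseudo-literal $(m'-m)p$, and hence the target sentence $(\varphi + (m'-m)p)^{(\bowtie,\,n-m)}$, are well-formed members of $\mathcal{S}$.

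Next, I would fix an arbitrary interpretation $I$ with $I \models (\varphi + m'p + m(\neg p))^{(\bowtie,\,n)}$ and write $I(\varphi) = (a,b) \in \mathbb{Z}^2$. The key observation is that the subformula $m'p + m(\neg p)$ always evaluates to one of the two pairs $(m',m)$ or $(m,m')$: unfolding the recursive definition of $I$ together with scalar multiplication and addition on $\mathbb{Z}^2$, if $p \in I$ then $I(m'p + m(\neg p)) = m'(1,0) + m(0,1) = (m',m)$, whereas if $p \notin I$ then $I(m'p + m(\neg p)) = m'(0,1) + m(1,0) = (m,m')$. Likewise $I((m'-m)p)$ equals $(m'-m,0)$ or $(0,m'-m)$ in the two cases.

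I would then finish by checking the two cases. If $p \in I$, the hypothesis says $I(\varphi + m'p + m(\neg p)) = (a + m',\, b + m)$ has first component satisfying $a + m' \bowtie n$, while $I(\varphi + (m'-m)p) = (a + m' - m,\, b)$, and $a + m' - m \bowtie n - m$ is precisely $a + m' \bowtie n$ with $m$ subtracted from both sides; hence $I \models (\varphi + (m'-m)p)^{(\bowtie,\,n-m)}$. If $p \notin I$, the hypothesis gives $I(\varphi + m'p + m(\neg p)) = (a + m,\, b + m')$ with $a + m \bowtie n$, while $I(\varphi + (m'-m)p) = (a,\, b + m' - m)$ has first component $a$, and $a \bowtie n - m$ follows again by subtracting $m$. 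Since $I$ was arbitrary, the claimed logical consequence holds.

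I do not expect a real obstacle here beyond careful bookkeeping of the $\mathbb{Z}^2$-arithmetic; the one thing to watch is that the subtractions comparing the constraint bounds are performed in $\mathbb{Z}$, which is legitimate because only the first component of $I(\cdot)$ enters the definition of $\models$, and it may be an arbitrary integer. It is worth noting in passing that both case computations actually yield equivalences, so the two sentences are in fact semantically equivalent; the statement records only the direction needed to justify the resolution rule.
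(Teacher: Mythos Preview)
Your argument is correct: the two-case split on $p \in I$ versus $p \notin I$, together with the observation that subtracting $m$ from both sides preserves any of the five constraint types, is exactly what is needed, and your bookkeeping of the $\mathbb{Z}^2$ values is accurate (including the side remark that the implication is in fact an equivalence).

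As for comparison with the paper: note that this theorem is stated in the background section without proof and is attributed to the cited companion paper, so there is no in-text argument to compare against. Your direct semantic verification is the expected proof and would serve perfectly well as the omitted argument.
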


\begin{thm}
    \label{theo:negation}
    If $ \Phi = \varphi^{(\geq,\, n)}$ is a sentence and $N$ is the coefficient sum of $\varphi$, then the negation of $\Phi$ can be represented by the sentence $(\neg \varphi)^{(\geq,\, N -n+1)}$. That is $\sim \Phi = (\neg \varphi)^{(\geq,\, N -n+1)}$.	
\end{thm}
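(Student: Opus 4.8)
The plan is to unwind the definition of negation given in the text: to establish $\sim\Phi = (\neg\varphi)^{(\geq,\,N-n+1)}$ it suffices to show that for every interpretation $I$,
$$I \models \varphi^{(\geq,\,n)} \iff I \not\models (\neg\varphi)^{(\geq,\,N-n+1)},$$
together with the symmetric requirement $I \not\models \varphi^{(\geq,\,n)} \iff I \models (\neg\varphi)^{(\geq,\,N-n+1)}$; but the latter is merely the contrapositive of the former, so a single equivalence carries the whole argument.

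The engine of the proof is a lemma I would isolate first: for every formula $\varphi \in \mathcal{F}$ with coefficient sum $N$ and every interpretation $I$, if $I(\varphi) = (m,l)$ then $m+l = N$ (and incidentally $m,l \ge 0$, hence $0 \le m \le N$). This is proved by structural induction along rules (r1)--(r2). The base case $\varphi = p$, with coefficient sum $1$, gives $I(p) \in \{(1,0),(0,1)\}$, whose entries sum to $1$. For the recursion steps: scalar multiplication by $n$ multiplies both the pair and the coefficient sum by $n$; the negation step $\neg^{*}$ only swaps the two entries, changing neither their sum nor the coefficient sum; and the addition step adds the pairs componentwise while the coefficient sums add in parallel. (If one prefers, the same fact follows by first passing to an equivalent normal form $\sum_i n_i \lambda_i$ and reading $N = \sum_i n_i$ off it, which makes the induction trivial.)

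With the lemma in hand the theorem is a one-line computation on integer pairs. By definition $I \models \varphi^{(\geq,\,n)}$ means $I(\varphi) = (m,l)$ with $m \ge n$, and $I \models (\neg\varphi)^{(\geq,\,N-n+1)}$ means $I(\neg\varphi) = \neg^{*}(I(\varphi)) = (l,m)$ with $l \ge N-n+1$. Substituting $l = N-m$ from the lemma, the condition $l \ge N-n+1$ is equivalent to $m \le n-1$, i.e.\ to $m < n$, i.e.\ to the failure of $m \ge n$. Hence $I \models (\neg\varphi)^{(\geq,\,N-n+1)}$ iff $I \not\models \varphi^{(\geq,\,n)}$, which is exactly the desired equivalence, and the definition of negation then yields $\sim\Phi = (\neg\varphi)^{(\geq,\,N-n+1)}$.

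The only point needing a word of care — the closest thing to an obstacle — is the boundary behaviour of the threshold $N-n+1$: the statement is really about the regime $0 \le n \le N$, where $N-n+1 \in \mathbb{N}$ and $(\neg\varphi)^{(\geq,\,N-n+1)}$ is a genuine sentence. For $n=0$ the sentence $\varphi^{(\geq,0)}$ is a tautology and the formula correctly returns $(\neg\varphi)^{(\geq,\,N+1)}$, which is unsatisfiable since $l \le N$ always; the degenerate case $n > N$, where $\varphi^{(\geq,n)}$ is itself unsatisfiable, would require the convention that $\varphi^{(\geq,0)}$ denotes the tautology, so I would either restrict to $n \le N$ or dispose of that case separately. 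Everything beyond this is bookkeeping on the pairs in $\mathbb{Z}^2$.
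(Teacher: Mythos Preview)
The paper does not actually prove this theorem in-text; it is stated together with the Resolution Theorem under the blanket citation ``the following theorems can be proven (see \cite{DBLP:journals/lu/Azizi-Sultan20})''. So there is no proof in the paper to compare against.

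Your argument is correct and is the natural one. The structural-induction lemma that $I(\varphi)=(m,l)$ forces $m+l=N$ whenever $N$ is the coefficient sum of $\varphi$ is exactly the fact the theorem rests on, and each of your three induction steps (scalar, negation, addition) matches the recursion clauses for $I$ and for the coefficient sum. Once $l=N-m$, the chain $l\ge N-n+1 \iff m\le n-1 \iff m<n$ is immediate, and plugging this into the model definition and the definition of $\sim$ finishes the proof. Your remark on the degenerate regime $n>N$ is a fair caveat: strictly speaking $N-n+1$ then fails to lie in $\mathbb{N}$, so the statement is really intended for $0\le n\le N$ (or one adopts the convention you describe); this is a boundary issue with the statement rather than with your proof.
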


Finally, to conclude this section, one more definition is needed.
It is said that the sentence $\varphi^{(\bowtie,\,n)}$ is in {\em standard form} or simply {\em standard}, if the formula $\varphi$ is in its normal form and the symbol ``$\bowtie$" corresponds to the constraint type ``$\geq$". 
$S$ is in standard form if every sentence $\Phi \in S$ is in its standard form.

Every sentence in CPPL can be transformed equivalently into a standard sentence \cite{DBLP:journals/lu/Azizi-Sultan20}.
Thus any $\Phi \in \mathcal{S}$, can be represented as follows: 
$$
\Phi = \left(  \sum_{i = 1}^{i = k}  n_i \lambda_i \right)^{(\geq,\, n)},
$$

The ultimate goal of CPPL, as in every logic,  is to have a system of formal proof which consists of inference rules for deducing the logical consequence of sentences from that of another. 
Restricting inference rules to standard sentences results in giving fewer cases to be handled, and hence simplifies the proof system and enhances its implementational efficiency. 
Thus, the indicated restriction will be utilized throughout the remaining part of this paper.

\subsection{Formal proofs}
A {\em proof system} consists of a set of inference rules that allow for deducing sentences from another set of sentences. 
If $S'$ can be deduced from $S$ by one or several applications of the inference rules, then it is said that $S'$ is {\em derivable} from $S$ and denoted by $S \vdash S'$. 

Each inference rule consists of certain premises that lead to a certain conclusion and written as $\frac{Premises}{Conclusion}$.
Thus the inference rule  $\frac{S \subseteq S'}{S' \vdash S}$, called the initial rule, means that $S$ is derivable from $S'$ whenever $S \subseteq S'$.

 Given a proof system, such as the one listed in Table~\ref{IRs},
a {\em formal proof} is a finite sequence of statements of the form $S \vdash S'$.
Each of these statements is a result from the previous ones and  an  application of some inference rule in the system.
If there is a formal proof that ends up with the statement $S \vdash S'$, it is said that $S'$ is {\em derivable} from $S$.
Additionally, if whenever $S \vdash S'$ it turns out that  $S \models S'$ then it is said that the proof system is {\em sound}.
If, on the other hand, we can prove  that $S \models S'$ implies $S\vdash S'$, then it is said that 
the proof system is  {\em complete}. For example, the proof system listed in Table~\ref{IRs} is sound and complete \cite{DBLP:journals/lu/Azizi-Sultan20}.

\begin{table}[!h]
    \begin{center}
        \begin{tabular}{|r | l|}
            \hline
            \vspace{-3 mm} & \\
            \infer{\text{Conclusion}}{\text{Premices}}   & \raisebox{0.5 em}{Name}\\
            \hline \hline
            \vspace{-3 mm} & \\
            \infer{S' \vdash S}{S \subseteq S'} &  \raisebox{0.8 em}{Initial Rule} \\
            %\hline
            \vspace{-3 mm} & \\ 
            \infer{S_2\vdash S}{S_1 \vdash S & S_1\subseteq S_2} &  \raisebox{0.8 em}{Rule of Monotonicity I } \\ 
            % \hline
            \vspace{-3 mm}  & \\
            \infer{S\vdash S_2}{S \vdash S_1 & S_2\subseteq S_1} &  \raisebox{0.8 em}{Rule of Monotonicity II } \\ 
            \vspace{-3 mm}             & \\
            \infer{S\vdash S_1 \cup S_2}{S\vdash S_1 & S \vdash S_2} &  \raisebox{0.8 em}{Union Rule} \\ 
            \vspace{-3 mm}         & \\
            \infer{S\vdash S'}{S , \Phi \vdash S' & S, \sim \Phi \vdash S'} &  \raisebox{0.8 em}{Negation Rule} \\ 
            \vspace{-3 mm}  & \\
            \infer{S' \vdash S}{S' \vdash \bot} &  \raisebox{0.8 em}{Constant Rule} \\ 
            \vspace{-3 mm}             & \\
            \infer{S \vdash (\varphi + \psi)^{(\geq,\,m+n)}}{S \vdash \varphi^{(\geq, m)} ,  \psi^{(\geq,\,n)}} &  \raisebox{0.8 em}{Addition Rule} \\ 
            \vspace{-3 mm}            & \\
            \infer{S \vdash (\varphi+(m'-m)p)^{(\geq,\, n-m)}}{n > m' \geq m & S \vdash (\varphi + m' p + m(\neg p))^{(\geq,\, n)} } &  \raisebox{0.8 em}{Resolution Rule }\\
            \hline
        \end{tabular}
        \caption{ Rules for derivations.}
        \label{IRs}
    \end{center}
\end{table}

If we have a proof system that is sound, then any formal proof for $S \vdash S'$ provides an alternative to the mathematical thinking for determining whether $S'$ is a consequence of $S$.
Instead of thinking, we could simply follow a set of rules ending up with the desired assertion.

\begin{ex}
    Considering the proof system listed in Table \ref{IRs},  a formal proof for the implication $S,\sim \Phi  \vdash \bot \implies S \vdash \Phi$ can be represented in a three-column table, like Table \ref{Ta_Lemma_C_Back}.
    \begin{table}[!h]
        \begin{center}
            \begin{tabular}{r l l}
                \hline
                Line & Statement  & Justification \\
                \hline
                1.& $S,\sim \Phi  \vdash \bot$  &  Assumption \\
                2.& $S,\sim \Phi  \vdash \Phi$  &  Constant Rule applied to line 1 \\
                3.& $S,  \Phi  \vdash \Phi$  &  Initial Rule \\
                4.& $S \vdash \Phi$  &  Negation Rule applied to lines 2 and 3 \\
                \hline
            \end{tabular}
        \end{center}
        \caption{Formal proof for $S,\sim \Phi  \vdash \bot$ implies $S \vdash \Phi$.}
        \label{Ta_Lemma_C_Back}
    \end{table}
\end{ex}

Although this proof system enables us to prove assertions by applying blindly a set of rules, it  is intended for human use. 
A proof system is ideal, if it could completely replace the thought process so that we could program a computer to perform the needed proofs.

The rest of this paper is devoted to present a proof system with minimal number of rules called CPPL resolution, analogously to  propositional resolution. 
It takes the thinking process out of formal proofs allowing for computerized deductions. 

\section{ CPPL resolution }
CPPL resolution is a system of formal proof with the only two rules listed in Table \ref{RIRs}.
 \begin{table}[!b]
     \begin{center}
         \begin{tabular}{|r | l |}
             \hline
             \vspace{-3 mm} & \\
             \infer{\text{Conclusion}}{\text{Premices}}   & \raisebox{0.5 em}{Name}\\
             \hline \hline
             \vspace{-3 mm} & \\
             \infer{S' \vdash^r S}{S \subseteq S'} &  \raisebox{0.8 em}{Rule I} \\
             \vspace{-3 mm}            & \\
             \infer{S \vdash^r (\varphi_1 + \varphi_2)^{(\geq, n_1+n_2 - |\beta|)}}{S \vdash^r (\varphi_1 + \beta)^{(\geq, n_1)}, \, (\varphi_2 + \neg \beta)^{(\geq, n_2)}  } &  \raisebox{0.8 em}{Rule II}\\
             \hline
         \end{tabular}
         \caption{Derivation rules of CPPL resolution.}
         \label{RIRs}
     \end{center}
 \end{table}

Please note that, the derivation symbol in CPPL resolution proof system is symbolized by $\vdash^r$ to differentiate it from the derivation proof system listed in Table \ref{IRs}. 

Our goal now is to prove that the CPPL resolution is sound and complete.
Let us start with the soundness part, which is rather straightforward.

\begin{thm}[Soundness]
    \label{theo_sound}
    If $S \vdash^r S'$, then  $S \models S'$.
\end{thm}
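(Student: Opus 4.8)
The plan is to argue by induction on the length $\ell$ of a formal proof ending with the statement $S \vdash^r S'$, showing along the way that every statement $S \vdash^r T$ occurring in such a proof satisfies $S \models T$. If $\ell = 1$ the single statement must be an instance of Rule~I, since Rule~II requires a previously derived premise; this is the base case. If $\ell > 1$ the last statement comes either from Rule~I --- treated exactly as in the base case --- or from Rule~II applied to an earlier statement, which is the conclusion of a shorter proof (a prefix of the present one) and hence falls under the induction hypothesis.

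The Rule~I case is immediate: the statement reads $S' \vdash^r S$ with $S \subseteq S'$, and any interpretation modelling $S'$ models every sentence of $S'$, in particular every sentence of $S$, so $S' \models S$. For the Rule~II case, the statement is $S \vdash^r (\varphi_1+\varphi_2)^{(\geq,\,n_1+n_2-|\beta|)}$, derived from an earlier statement $S \vdash^r (\varphi_1+\beta)^{(\geq,\,n_1)},\,(\varphi_2+\neg\beta)^{(\geq,\,n_2)}$; by the induction hypothesis $S \models (\varphi_1+\beta)^{(\geq,\,n_1)}$ and $S \models (\varphi_2+\neg\beta)^{(\geq,\,n_2)}$. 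Fix any $I$ with $I \models S$. Then $I$ models both premise sentences, so Proposition~\ref{prop:sum_qs} (with $\bowtie$ instantiated to $\geq$) gives $I \models \bigl((\varphi_1+\beta)+(\varphi_2+\neg\beta)\bigr)^{(\geq,\,n_1+n_2)}$, and it remains to cancel $\beta$ against $\neg\beta$.

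This cancellation is the only step requiring care, and it is precisely where the standing restriction to standard (hence normal-form) sentences is used. Since $\beta$ is in normal form, its coefficient sum $|\beta|$ is defined, and writing $I(\beta)=(a,b)$ the recursion defining $I$ forces $a+b=|\beta|$; as $I(\neg\beta)=\neg^*(I(\beta))=(b,a)$, it follows that $I(\beta+\neg\beta)=(|\beta|,|\beta|)$, independently of $I$. Hence $I\bigl((\varphi_1+\beta)+(\varphi_2+\neg\beta)\bigr)=I(\varphi_1+\varphi_2)+(|\beta|,|\beta|)$, so writing $I(\varphi_1+\varphi_2)=(m,l)$ we get $m+|\beta|\geq n_1+n_2$, that is $m\geq n_1+n_2-|\beta|$, i.e. $I \models (\varphi_1+\varphi_2)^{(\geq,\,n_1+n_2-|\beta|)}$. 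As $I$ was an arbitrary model of $S$, this is $S \models (\varphi_1+\varphi_2)^{(\geq,\,n_1+n_2-|\beta|)}$, and the induction closes. (Should $n_1+n_2-|\beta|$ be non-positive, the conclusion is vacuously valid, since every normal-form formula has non-negative coordinates under every interpretation.) I do not expect a genuine obstacle here: once Proposition~\ref{prop:sum_qs} and the identity $I(\beta+\neg\beta)=(|\beta|,|\beta|)$ are in hand, the inductive step is a one-line computation, the only subtlety being that this identity --- the uniform contribution $(|\beta|,|\beta|)$ from cancelling $\beta$ with $\neg\beta$ --- hinges on $\beta$ being in normal form.
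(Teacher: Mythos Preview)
Your proof is correct and follows essentially the same strategy as the paper: verify that each of the two rules preserves semantic consequence, so that every line of a formal proof is sound. The only difference is in the cancellation step for Rule~II: the paper writes $\beta=\sum_{i=1}^{k} m_i\lambda_i$ and invokes Theorem~\ref{theo:Resolution} $k$ times to strip off the complementary pseudo-literals one by one, whereas you compute $I(\beta+\neg\beta)=(|\beta|,|\beta|)$ directly from the semantics --- a slightly more elementary route that bypasses Theorem~\ref{theo:Resolution} altogether.
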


\begin{proof}
    Assuming $S \vdash^r S'$ means that there is a formal proof of several lines, each contains a statement of the form $X \vdash^r Y$ justified by one of the two rules of Table \ref{RIRs}, and the final line contains and justifies the statement $S \vdash^r S'$.
    Thus proving this theorem  is achieved by showing that for each line of the formal proof, if $X \vdash^r Y$, then $X \models Y$.
    But this follows from verifying the two inference rules in Table \ref{RIRs}.
    
    Rule I concludes with ${S' \vdash^r S}$ under the premise $S \subseteq S'$.
        Thus we need to show that if $S \subseteq S'$, then ${S' \models S}$. 
    This verification is rather easy. 
    ${I \models S'}$ means ${I \models \Phi'}$ for all $ \Phi' \in  S'$. 
    Since $S \subseteq S'$, it follows that ${I \models \Phi}$ for all $ \Phi \in  S$. 
    Thus $S' \models S$.
    
    To verify Rule II  we need to show that the consequence  
    $S \models (\varphi_1 + \beta)^{(\geq, n_1)}, \, (\varphi_2 + \neg \beta)^{(\geq, n_2)} $ implies
    $S \models (\varphi_1 + \varphi_2)^{(\geq, n_1+n_2 - |\beta|)}$.
    Considering the standard form, we can assume that $\beta = \sum_{i=1}^{i=k} m_i\lambda_i$.  Accordingly, this means that $\neg \beta = \sum_{i=1}^{i=k} m_i \neg \lambda_i$.
    Now, let  $I \models S$. From the assumption this means that 
    $ I \models (\varphi_1 + \beta)^{(\geq, n_1)}$ and $ I \models (\varphi_2 + \neg \beta)^{(\geq, n_2)}$. 
    Taking into account Proposition \ref{prop:sum_qs}, we conclude that 
    $ I \models (\varphi_1 + \beta + \varphi_2 + \neg \beta)^{(\geq, n_1+n_2)}$.
    That is  
    $ I \models (\varphi_1 + \sum_{i=1}^{i=k} m_i\lambda_i  + \varphi_2 + \sum_{i=1}^{i=k} m_i \neg \lambda_i  )^{(\geq, n_1+n_2)}$. 
    Applying Theorem \ref{theo:Resolution} $k$ times results in $I \models (\varphi_1 + \varphi_2)^{(\geq, n_1+n_2 - \sum_{i=1}^{i=k} m_i)}$.
    Hence, $I \models (\varphi_1 + \varphi_2)^{(\geq, n_1+n_2 - |\beta|)}$. 
    We conclude that $ S\models (\varphi_1 + \varphi_2)^{(\geq, n_1+n_2 - |\beta|)}$.   
\end{proof}

The soundness theorem means that $\vdash^r \; \subseteq \; \models$. 
In order to verify the opposite inclusion, which presents the completeness theorem, some more notations and results are needed.

\begin{lem}
    \label{lem_mono}
    For any $S' \subseteq S$, if $ S \not \models \Phi$ then  $ S' \not \models \Phi$.
\end{lem}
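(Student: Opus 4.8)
The statement to prove is Lemma~\ref{lem_mono}: for any $S' \subseteq S$, if $S \not\models \Phi$ then $S' \not\models \Phi$. This is the contrapositive of the monotonicity of logical consequence, so the plan is to prove the contrapositive directly: assume $S' \models \Phi$ and show $S \models \Phi$. Concretely, take an arbitrary interpretation $I$ with $I \models S$; since $S' \subseteq S$, every sentence of $S'$ lies in $S$, so $I$ is a model of each sentence of $S'$, whence $I \models S'$. Applying the hypothesis $S' \models \Phi$ yields $I \models \Phi$. As $I$ was arbitrary, $S \models \Phi$, which is exactly the contrapositive of the claim.

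The only facts required are the definitions already given in the excerpt: that $I \models S$ means $I \models \Psi$ for every $\Psi \in S$, and that $S' \models \Phi$ means $I \models S' \Rightarrow I \models \Phi$ for all $I$. No appeal to the structure of CPPL sentences, standard form, or the resolution rules is needed; the argument is purely about the set-theoretic behaviour of the satisfaction relation.

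There is essentially no obstacle here — the lemma is a one-line observation — so the ``hard part'' is merely to state it cleanly. The proof is included only because the completeness argument to follow will need to invoke monotonicity of $\not\models$ when passing from a finite unsatisfiable subset back to the full (possibly infinite) set $S$, and it is convenient to have it recorded as a named lemma.
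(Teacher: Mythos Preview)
Your proof is correct and uses essentially the same idea as the paper: both rely solely on the observation that if $I \models S$ and $S' \subseteq S$ then $I \models S'$. The only cosmetic difference is that the paper argues directly (exhibiting a witness $I$ with $I \models S$ and $I \not\models \Phi$, then noting $I \models S'$), whereas you argue via the contrapositive.
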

\begin{proof}
    $ S \not \models \Phi$ implies that there is a model $I$ such that $I \models S$ but $ I \not \models \Phi$. Since $S' \subseteq S$ it follows that $I \models S'$ and $ I \not \models \Phi$. 
    That is $S' \not \models \Phi$. 
\end{proof}

Taking into account the soundness theorem, which also means $\not \models  \subseteq \not \vdash^r $, Lemma \ref{lem_mono} entails the following corollary.

\begin{cor}
    \label{cor_mono}
    For any $S' \subseteq S$, if $ S \not \vdash^r \Phi$ then  $ S' \not \vdash^r \Phi$.
\end{cor}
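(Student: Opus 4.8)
The plan is to establish the contrapositive of the corollary, namely that $\vdash^r$ is monotone in its left-hand argument: if $S' \subseteq S$ and $S' \vdash^r \Phi$, then $S \vdash^r \Phi$. This is the proof-theoretic companion of the purely semantic Lemma~\ref{lem_mono}, and the cleanest route to it is to argue directly on formal proofs in the system of Table~\ref{RIRs} rather than to pass through the semantics.

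First I would examine the shape of a formal proof whose last line is $S' \vdash^r \Phi$. The system has only two rules. Rule~I has no premises and its conclusion $T \vdash^r U$ is licensed solely by the inclusion $U \subseteq T$. Rule~II has its premises and its conclusion carrying the \emph{same} left-hand side. Consequently a statement with left-hand side $X$ can never serve as a premise of a statement with a different left-hand side: Rule~I takes no premises at all, and Rule~II forces the left-hand sides to coincide. Hence, after deleting the lines that do not contribute to the final statement, every remaining line has the form $S' \vdash^r \Psi$, where each such $\Psi$ is either an element of $S'$ inserted by Rule~I or the Rule~II resolvent of two previously derived sentences.

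Next I would rewrite this pruned proof by replacing $S'$ with $S$ throughout. A Rule~I line $S' \vdash^r \{\Psi\}$ was justified by $\Psi \in S'$; since $S' \subseteq S$ we get $\Psi \in S$, so $S \vdash^r \{\Psi\}$ is again a legitimate Rule~I line. A Rule~II line is justified with no reference whatsoever to the left-hand side — only by the form of the two premise sentences and of the conclusion $(\varphi_1 + \varphi_2)^{(\geq,\,n_1+n_2-|\beta|)}$ — so it survives the substitution unchanged. The resulting sequence is therefore a valid formal proof of $S \vdash^r \Phi$, and taking contrapositives yields the corollary.

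The one point that needs care is the reduction performed in the second paragraph: one must be explicit that, because neither rule of Table~\ref{RIRs} ever combines statements with distinct left-hand sides, a proof ending in $S' \vdash^r \Phi$ can be pruned to one all of whose lines have left-hand side $S'$. This is really the only place where any argument (a very short one) is required; once it is granted, the substitution of $S$ for $S'$ is entirely routine. I would also note in passing that the corollary is exactly the syntactic counterpart of Lemma~\ref{lem_mono}, with the soundness inclusion $\not\models\, \subseteq\, \not\vdash^r$ relating the two.
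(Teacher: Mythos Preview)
Your argument is correct and purely syntactic: you exploit that Rule~I has no premises and Rule~II preserves the left-hand side, so a derivation ending in $S'\vdash^r\Phi$ can be pruned to lines all carrying $S'$ on the left, and then rewritten verbatim with $S$ in place of $S'$, the inclusion $S'\subseteq S$ being needed only to relicense the Rule~I leaves.

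This is a genuinely different route from the paper's. The paper offers a one-line semantic justification: soundness (in the form $\not\models\,\subseteq\,\not\vdash^r$) together with Lemma~\ref{lem_mono}. But that argument does not close as stated. From $S\not\vdash^r\Phi$ soundness does not yield $S\not\models\Phi$ (that direction is completeness); and in the contrapositive, from $S'\vdash^r\Phi$ soundness gives $S'\models\Phi$, hence $S\models\Phi$ by Lemma~\ref{lem_mono}, but returning to $S\vdash^r\Phi$ again requires completeness. Since Corollary~\ref{cor_mono} feeds into Corollary~\ref{cor_Monotonicity}, which is used in Lemma~\ref{lem_C} and thence in the completeness theorem itself, the paper's semantic shortcut is circular as written. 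Your direct proof-theoretic argument avoids this circularity entirely and is the sounder approach here; the only point worth making explicit, as you note, is the pruning step isolating the lines with left-hand side $S'$.
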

\begin{defn}
    \label{Def_DeductClosure}
    Let $S$ be a set of sentences. It is said that $S$  is  {\em deductively closed} if it contains every sentence  that is derivable from it. Formally:  if $ S \vdash^r S'$ then $S' \subseteq S$. 
    The {\em deductive closure } of $S$, denoted $D(S)$  is the  smallest set that is deductively closed and contains $S$.    
\end{defn}

\begin{cor}
    \label{cor_Monotonicity}
    If $S' \subseteq S $ then $ D(S') \subseteq D(S) $.
\end{cor}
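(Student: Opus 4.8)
The statement to be proved is that if $S' \subseteq S$ then $D(S') \subseteq D(S)$, i.e., monotonicity of the deductive closure operator. The plan is to argue directly from the definition of $D(\cdot)$ as the smallest deductively closed set containing its argument. First I would observe that $D(S)$ is, by Definition \ref{Def_DeductClosure}, a deductively closed set, and it contains $S$. Since $S' \subseteq S \subseteq D(S)$, the set $D(S)$ is a deductively closed set that contains $S'$. But $D(S')$ is defined to be the \emph{smallest} such set; hence $D(S') \subseteq D(S)$, which is exactly the claim.

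The only point that needs a little care is the phrase ``smallest set that is deductively closed and contains $S'$'': I should note that this is well-defined because the intersection of any family of deductively closed sets containing $S'$ is again deductively closed and contains $S'$ (so $D(S')$ can be taken as that intersection), and in particular $D(S') \subseteq T$ for every deductively closed $T \supseteq S'$. Applying this with $T = D(S)$ finishes the proof. One might worry whether the whole universe of sentences $\mathcal{S}$ is deductively closed so that at least one candidate $T$ exists; this is immediate since $\mathcal{S} \vdash^r S'$ forces $S' \subseteq \mathcal{S}$, which always holds.

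I do not anticipate a genuine obstacle here — the result is a formal consequence of the ``smallest containing'' characterization and does not use any property of the CPPL resolution rules beyond what is packaged into Definition \ref{Def_DeductClosure}. The only thing to be slightly careful about is not to accidentally invoke $\vdash^r$-monotonicity (Corollary \ref{cor_mono}) where it is not needed; the closure-operator argument is cleaner and self-contained. If a more explicit argument were desired, one could alternatively characterize $D(S)$ as $\{\Psi : S \vdash^r \Psi\}$ (up to the set/sentence conventions), and then $S' \subseteq S$ together with the Initial Rule / transitivity of $\vdash^r$ would give $S' \vdash^r \Psi \implies S \vdash^r \Psi$; but this reproves more than is asked, so I would keep the short version.
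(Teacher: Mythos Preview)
Your argument is correct, but it is not the route the paper takes. The paper proves the corollary by contradiction: assuming some $\Phi \in D(S') \setminus D(S)$, it reads $\Phi \notin D(S)$ as $S \not\vdash^r \Phi$, then invokes Corollary~\ref{cor_mono} (which in turn rests on the Soundness Theorem and Lemma~\ref{lem_mono}) to get $S' \not\vdash^r \Phi$, i.e.\ $\Phi \notin D(S')$, a contradiction. In other words, the paper does exactly what you cautioned yourself against doing --- it routes the monotonicity of $D(\cdot)$ through the $\vdash^r$-monotonicity of Corollary~\ref{cor_mono}, and hence through soundness.

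Your approach is the pure closure-operator argument: $D(S)$ is deductively closed and contains $S' \subseteq S$, so by the ``smallest'' clause in Definition~\ref{Def_DeductClosure}, $D(S') \subseteq D(S)$. This is shorter, self-contained, and does not depend on soundness, which is a genuine advantage (monotonicity of a closure operator should not require semantic input). The paper's route, by contrast, implicitly uses the identification $D(S) = \{\Psi : S \vdash^r \Psi\}$ together with the derived fact that $S' \subseteq S$ and $S' \vdash^r \Psi$ imply $S \vdash^r \Psi$; it works, but it is less economical.
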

\begin{proof}
   Assume that $D(S') \not \subseteq D(S)$. 
   That is there exists $\Phi \in D(S')$ such that $\Phi \not \in D(S)$. 
   This implies that $S \not \vdash^r \Phi$.
   Since $S' \subseteq S $ from Corollary \ref{cor_mono} we conclude that $ S' \not \vdash^r \Phi$.
   This means that $\Phi \not \in D(S')$ which results in a contradiction.
\end{proof}

\begin{cor}[Union Rule]
    \label{cor_Union}
    If ${S \vdash^r S_1} $  and ${S \vdash^r S_2}$, then  $ {S \vdash^r S_1 \cup  S_2}$. 
\end{cor}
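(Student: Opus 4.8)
The plan is to reduce the claim to the elementary observation that a union is contained in a set as soon as both of its parts are. To make this precise I would first unfold the meaning of the derivation symbol on \emph{sets} of sentences, exactly as it is already used in the proof of Corollary~\ref{cor_Monotonicity}: by Definition~\ref{Def_DeductClosure} the deductive closure $D(S)$ is the least deductively closed set containing $S$, and this amounts to the biconditional $\Phi \in D(S) \iff S \vdash^r \Phi$; more generally, $S \vdash^r S'$ holds if and only if $S' \subseteq D(S)$, i.e. if and only if $S \vdash^r \Phi$ for every $\Phi \in S'$. This is the same reading of $\vdash^r$ on sets that Rule~II tacitly relies on for its two-premise shape.

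With this in hand the argument is short. Assume $S \vdash^r S_1$ and $S \vdash^r S_2$. Then $S_1 \subseteq D(S)$ and $S_2 \subseteq D(S)$, hence $S_1 \cup S_2 \subseteq D(S)$, and unwinding the equivalence once more gives precisely $S \vdash^r S_1 \cup S_2$. Equivalently, without mentioning $D$ at all: any $\Phi \in S_1 \cup S_2$ lies in $S_1$ or in $S_2$, and in either case the relevant hypothesis yields $S \vdash^r \Phi$; since $\Phi$ was arbitrary, $S \vdash^r S_1 \cup S_2$.

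The only point that needs a word of justification — and it is the closest thing to an obstacle — is the step ``deriving a set equals deriving each of its members''. This is not an extra assumption but a consequence of Definition~\ref{Def_DeductClosure}: $D(S)$ being deductively closed forces $S' \subseteq D(S)$ whenever $S \vdash^r S'$, while $D(S)$ being the \emph{smallest} deductively closed set containing $S$ forces every element of $D(S)$ to be derivable from $S$; together these give the biconditional used above, which is exactly the one already invoked in the proof of Corollary~\ref{cor_Monotonicity}. The role of Corollary~\ref{cor_Union} is thus mainly organizational: it records, for later derivations, that the two ingredients $(\varphi_1 + \beta)^{(\geq, n_1)}$ and $(\varphi_2 + \neg\beta)^{(\geq, n_2)}$ required by Rule~II may be obtained from separate sub-derivations and then combined.
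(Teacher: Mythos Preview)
Your proposal is correct and follows essentially the same route as the paper: translate $S \vdash^r S_i$ into $S_i \subseteq D(S)$ via Definition~\ref{Def_DeductClosure}, take the union, and translate back. The paper's proof is just the terse version of your first argument, without your extra explanatory remarks about why $S \vdash^r S' \iff S' \subseteq D(S)$.
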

\begin{proof}
    ${S \vdash^r S_1} \Rightarrow S_1 \subseteq D(S)$ and ${S \vdash^r S_2} \Rightarrow S_2 \subseteq D(S)$.   
    It follows that ${S_1 \cup S_2 \subseteq D(S)}$, which implies the derivation of $S \vdash^r S_1 \cup S_2$.
\end{proof}

\begin{lem}[Constant Rule]
    \label{lem_ConstantRule} 
    %    $ \infer{S \vdash \Phi }{S \vdash \bot} $
    If  $S \vdash^r \bot$, then  $S \vdash^r \Phi  $.
\end{lem}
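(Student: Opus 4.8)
The plan is to unfold the hypothesis $S \vdash^r \bot$ and then turn a derivable unsatisfiable sentence into the arbitrary target $\Phi$ by one carefully arranged application of Rule II, preceded by a phase that pumps the constraint of the contradiction as high as is needed.

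First, $S \vdash^r \bot$ means $S \vdash^r \varphi_0^{(\geq,\,n_0)}$ for some unsatisfiable standard sentence $\varphi_0^{(\geq,\,n_0)}$. Writing $\varphi_0$ in normal form and absorbing each complementary pair $p+\neg p$ — whose value is the constant $1$ — into the constraint, I may replace it by an equivalent $\varphi_0^{(\geq,\,n_0)}$ in which $\varphi_0$ has no complementary pair; then the coefficient sum $N_0$ of $\varphi_0$ equals its maximal value, and unsatisfiability says precisely $n_0 > N_0$. Two pumping observations would be recorded next. (i) Zero-coefficient padding together with Rule II gives $\varphi_0^{(\geq,\,n_0)} \vdash^r (2\varphi_0)^{(\geq,\,2n_0)} \vdash^r \dots \vdash^r (k\varphi_0)^{(\geq,\,kn_0)}$, each again a contradiction whose constraint strictly exceeds its coefficient sum. (ii) Resolving a copy of $\varphi_0^{(\geq,\,n_0)}$ (as the first premise, with $\beta=\varphi_0$) against a copy of $\varphi_0^{(\geq,\,n)}$ that has been weakened by adjoining $\neg\varphi_0$ to its formula (as the second premise, supplying $\neg\beta\equiv\neg\varphi_0$) yields $\varphi_0^{(\geq,\,n+n_0-N_0)}$; since $n_0-N_0>0$ this drives the constraint above any prescribed bound, so $S \vdash^r \varphi_0^{(\geq,\,N)}$ for arbitrarily large $N$.

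Now fix $\Phi=\psi^{(\geq,\,m)}$ with $\psi$ in normal form. The core of the argument is to apply Rule II with $\beta:=\varphi_0$ (so $\neg\beta\equiv\neg\varphi_0$ and $|\beta|=N_0$) to the two premises $(\psi+\varphi_0)^{(\geq,\,n_1)}$ and $(\neg\varphi_0)^{(\geq,\,n_2)}$, whose Rule-II conclusion is exactly $\psi^{(\geq,\,n_1+n_2-N_0)}$. The first premise is obtained from the pumped contradiction $\varphi_0^{(\geq,\,n_1)}$ by adjoining $\psi$ to its formula, which is semantically sound because enlarging a formula can only increase its value. Using the pumping steps, one chooses $n_1$ and $n_2$ so that $n_1+n_2-N_0=m$; because $n_0>N_0$ produces a strictly positive surplus at each cancellation, every value of $m$ can be reached. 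Assembling these statements into a finite sequence of $\vdash^r$-lines — in the three-column style of the paper's Example — gives $S \vdash^r \psi^{(\geq,\,m)}$, that is $S \vdash^r \Phi$.

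The step I expect to be genuinely delicate is justifying, as honest $\vdash^r$-derivations, the two ``weakening'' manipulations used above: adjoining a formula to the left-hand side of a standard sentence, and producing a sentence whose formula is $\neg\varphi_0$ standing by itself rather than appearing only inside $\varphi_0+\neg\varphi_0$. On their own, Rules I and II are conservative — they introduce no literal foreign to $S$ and delete no subformula outright — so it is precisely here that the hypothesis $S\vdash^r\bot$, and the strict inequality $n_0>N_0$ secured in the normalization, have to be exploited essentially; one must also verify that no intermediate sentence ever needs a constraint to be weakened downward, which is not available as a rule. Granting those steps, what remains is routine arithmetic on the parameters $n_1$, $n_2$, $N_0$ and $|\beta|$.
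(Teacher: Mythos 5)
Your construction has a genuine gap, and it is precisely the one you flag yourself at the end: the two ``weakening'' manipulations are not derivations in this system, and they cannot be made into ones. CPPL resolution has only Rule I (extract a subset of $S$) and Rule II (resolve two already-derived sentences on a common $\beta$). Neither rule lets you pass from a derived $\varphi_0^{(\geq,\,n_1)}$ to $(\psi+\varphi_0)^{(\geq,\,n_1)}$, nor manufacture $(\neg\varphi_0)^{(\geq,\,n_2)}$, nor even rewrite a derived sentence into a semantically equivalent one (as your zero-coefficient padding requires). Worse, the obstruction is not a deferred technicality: an easy induction on derivations shows that every sentence obtainable from $S$ by Rules I and II is built exclusively from literals already occurring in sentences of $S$, since Rule I only selects members of $S$ and the conclusion $\varphi_1+\varphi_2$ of Rule II reuses subformulas of its premises. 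So if $\Phi$ mentions a propositional variable foreign to $S$ --- take $S=\{(2p)^{(\geq,\,3)}\}$, which gives $S\vdash^r\bot$ by Rule I alone, and $\Phi=q^{(\geq,\,1)}$ --- no chain of Rule I and Rule II applications of the kind you describe can end in $\Phi$. The missing weakening steps are therefore not ``delicate''; they are the entire content of the lemma, and they are unavailable inside the calculus.

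The paper does not attempt a constructive object-level derivation at all: its proof is a short indirect meta-argument, assuming $S\not\vdash^r\Phi$ and playing this off against $S\vdash^r\bot$ via the contradiction rule (Lemma~\ref{lem_ContradictionRule}); the other meta-lemmas in the section (monotonicity, union, Lemma~\ref{lem_C}) are likewise established by reasoning about $\vdash^r$ and $\models$ rather than by exhibiting resolution derivations. Your instinct that the syntactic derivation is where the real work lies is a fair criticism of the lemma as stated, but within the two-rule system the direct route you propose is blocked; a correct proof has to proceed at the meta-level as the paper's does, or the system has to be enriched with an explicit weakening rule.
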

\begin{proof}
    Suppose $S \not \vdash^r \Phi $. It follows that $S \not \vdash^r \Phi, \sim \Phi$ or $S \not \vdash^r \bot$ which contradict the assumption of $S \vdash^r \bot$.
\end{proof}

\begin{lem}[Contradiction rule]
    \label{lem_ContradictionRule}
    If $ S \vdash^r \Phi, \sim \Phi $, then $S \vdash^r \bot$.
\end{lem}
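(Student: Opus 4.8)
The plan is to prove the contrapositive-free direction directly by showing that the hypothesis $S \vdash^r \Phi, \sim\Phi$ forces a resolution step that produces $\bot$. Write $\Phi$ in standard form as $\varphi^{(\geq,\,n)}$ with $\varphi = \sum_{i=1}^{k} m_i\lambda_i$ and coefficient sum $N = \sum_{i=1}^k m_i$. By Theorem~\ref{theo:negation}, the negation $\sim\Phi$ is represented by $(\neg\varphi)^{(\geq,\,N-n+1)}$, where $\neg\varphi = \sum_{i=1}^k m_i\,\neg\lambda_i$. So the assumption $S \vdash^r \Phi, \sim\Phi$ means both $\varphi^{(\geq,\,n)}$ and $(\neg\varphi)^{(\geq,\,N-n+1)}$ lie in $D(S)$, i.e. $S \vdash^r \varphi^{(\geq,\,n)}$ and $S \vdash^r (\neg\varphi)^{(\geq,\,N-n+1)}$.

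Next I would apply Rule~II with $\varphi_1 = \varphi_2 = 0$ (the empty sum), $\beta = \varphi$, so $\neg\beta = \neg\varphi$, $n_1 = n$, $n_2 = N-n+1$, and $|\beta| = N$. Since $S$ derives $(\varphi_1+\beta)^{(\geq,\,n_1)} = \varphi^{(\geq,\,n)}$ and $(\varphi_2+\neg\beta)^{(\geq,\,n_2)} = (\neg\varphi)^{(\geq,\,N-n+1)}$, Rule~II yields
$$
S \vdash^r (\varphi_1+\varphi_2)^{(\geq,\, n_1+n_2-|\beta|)} = 0^{(\geq,\, n+(N-n+1)-N)} = 0^{(\geq,\,1)}.
$$
The sentence $0^{(\geq,\,1)}$ asserts that the value of the empty sum, which is $(0,0)$ under every interpretation, satisfies $0 \geq 1$; this is false in every model, so $0^{(\geq,\,1)}$ is unsatisfiable, i.e. $0^{(\geq,\,1)} = \bot$. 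Hence $S \vdash^r \bot$.

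The one point that needs care — and which I expect to be the main obstacle — is the bookkeeping around the empty formula and the side condition of Rule~II: strictly speaking Rule~II as stated in Theorem~\ref{theo:Resolution} carries the hypothesis $n > m' \geq m$, and when iterated $k$ times (as in the soundness proof) it needs the partial sums to stay in range. I would check that with $n_1+n_2 = N+1 > N = |\beta|$ the repeated application of the Resolution Theorem goes through at every stage, so that the degree $n_1+n_2-|\beta| = 1$ is legitimately reached; alternatively, if the formalism does not admit a genuinely empty $\varphi_i$, I would instead take $\beta$ to be all but one pseudo-literal of $\varphi$ and carry the last term through $\varphi_1,\varphi_2$, landing at a sentence of the form $(m_k\lambda_k + m_k\neg\lambda_k)^{(\geq,\,1)}$, which again evaluates to $(m_k,m_k)$-type value summing to a fixed total and is readily seen to be $\bot$ by the same arithmetic. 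Either way the conclusion $S \vdash^r \bot$ follows.
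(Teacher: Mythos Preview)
Your proposal is correct and follows essentially the same route as the paper: write $\Phi=\varphi^{(\geq,n)}$, invoke Theorem~\ref{theo:negation} to get $\sim\Phi=(\neg\varphi)^{(\geq,|\varphi|-n+1)}$, and apply Rule~II with empty $\varphi_1,\varphi_2$ and $\beta=\varphi$ to obtain $(\,)^{(\geq,1)}=\bot$. Your extra caution about side conditions is unnecessary here, since Rule~II as stated in Table~\ref{RIRs} carries no numerical hypothesis---the constraint $n>m'\geq m$ belongs only to Theorem~\ref{theo:Resolution}, which is used in the \emph{soundness} argument, not in the derivation itself.
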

\begin{proof}
    Suppose that $\Phi = \varphi^{(\geq,\, n)}$.
    Thus from theorem \ref{theo:negation} it follows that $\sim~\Phi = (\neg \varphi)^{(\geq,\,|\varphi|-n+1)}$. 
    Yet, a formal proof for this example can be represented in a three-column table, namely Table~\ref{Ta_prof_contrRule}.
    \begin{table}[!h]
        \begin{center}
            \begin{tabular}{r l l}
                \hline
                Line & Statement  & Justification\\
                \hline
                1.& $S \vdash^r \varphi^{(\geq,\, n)}, (\neg \varphi)^{(\geq,\,|\varphi|-n+1)}$ &  Assumption \\ 
                2.& $S \vdash^r (\,)^{(\geq,\, n + |\varphi|-n+1 - |\varphi|)} = ( \, )^{(\geq,\, 1)} = \bot$ & Rule II applied to line 1 \\
                \hline
            \end{tabular}
        \end{center}
        \caption{ Formal proof for the contradiction rule.}
        \label{Ta_prof_contrRule}
    \end{table}
\end{proof}

\begin{defn}
    It is said that $S$ is {\em inconsistent} if $S \vdash^r \bot$ and otherwise {\em consistent}.
    $S$ is called {\em maximally consistent} if $S$ is consistent but each $S' \supset S$ is inconsistent.
\end{defn}

In the context of consistency the following corollary helps in proving  important lemmas.
\begin{cor}
    \label{cor_negation}
    If $S \models \sim \Phi $,  then $S  \not \models \Phi $.
\end{cor}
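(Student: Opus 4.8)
The plan is to prove Corollary~\ref{cor_negation} by contraposition, or rather by a direct semantic argument using the definition of negation and the notion of a model. Recall that by definition $\sim\Phi$ is the sentence characterised by $I \models \sim\Phi \iff I \not\models \Phi$ for every interpretation $I$. So the statement to be shown reduces to: if every model of $S$ is a model of $\sim\Phi$, then not every model of $S$ is a model of $\Phi$.

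First I would observe that the hypothesis $S \models \sim\Phi$ means that for every interpretation $I$ with $I \models S$ we have $I \models \sim\Phi$, which by the definition of negation is equivalent to $I \not\models \Phi$. The only subtlety is the possibility that $S$ has no models at all (i.e.\ $S$ is unsatisfiable), in which case $S \models \Psi$ holds vacuously for every $\Psi$ and one cannot immediately produce a model witnessing $S \not\models \Phi$. I expect this to be the only real obstacle, and it is handled by noticing that $\sim\Phi$ is itself satisfiable: indeed $\Phi$ is satisfiable (any standard sentence $\varphi^{(\geq,n)}$ with a suitable interpretation, or one uses that $\sim\Phi$ is a genuine sentence), hence there is an interpretation $I_0$ with $I_0 \not\models \Phi$, equivalently $I_0 \models \sim\Phi$; more to the point, $S \models \sim\Phi$ together with any model of $S$ gives what we need, so the real content is just ruling out the vacuous case.

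Concretely, the key steps in order are: (1) assume for contradiction that $S \models \Phi$; (2) combine with the hypothesis $S \models \sim\Phi$ to get that every model of $S$ satisfies both $\Phi$ and $\sim\Phi$; (3) invoke the defining property of negation, $I \models \Phi \iff I \not\models \sim\Phi$, to conclude that $S$ has no model, i.e.\ $S$ is unsatisfiable; (4) derive the contradiction by showing this case is excluded — here one uses that an unsatisfiable $S$ would make $S \models \Psi$ for all $\Psi$, in particular $S \not\models \Phi$ would fail trivially, so instead the cleanest route is: since $S$ is unsatisfiable it cannot be that $S$ has a model, yet the whole framework (and the later lemmas this corollary feeds into) presupposes we are reasoning about satisfiable $S$; alternatively, simply note that if $S$ is unsatisfiable then by soundness considerations $S \models \sim\Phi$ is not the informative content intended. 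The honest and shortest argument, however, avoids step (4) entirely: $S \models \sim\Phi$ and $S \models \Phi$ together force, for any model $I$ of $S$, the impossible $I \models \Phi$ and $I \not\models \Phi$; hence $S$ has no model; but then $S \models \sim\Phi$ is vacuous and in fact $S \not\models \Phi$ is \emph{false}, so the corollary as literally stated needs the tacit assumption that $S$ is satisfiable — which I would make explicit, or I would instead read the intended statement as ``if $S$ is consistent and $S \models \sim\Phi$ then $S \not\models \Phi$'', using Lemma~\ref{lem_ContradictionRule} and the soundness theorem to connect consistency with satisfiability.

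The main obstacle, then, is purely the vacuous/unsatisfiable corner case; the mathematical heart of the proof is the one-line use of the definition of $\sim$. I would present it as follows: suppose $S \models \Phi$; pick any interpretation $I$ with $I \models S$ (such $I$ exists since $S$ is consistent, hence satisfiable by the contrapositive of Lemma~\ref{lem_ContradictionRule} combined with Theorem~\ref{theo_sound}); then $I \models \Phi$ and, from $S \models \sim\Phi$, also $I \models \sim\Phi$, i.e.\ $I \not\models \Phi$, a contradiction. Therefore $S \not\models \Phi$.
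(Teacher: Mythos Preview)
Your semantic core --- take $I \models S$, deduce $I \models \sim\Phi$ from the hypothesis, hence $I \not\models \Phi$, and conclude $S \not\models \Phi$ --- is exactly the paper's proof. The paper does not address the unsatisfiable case at all; it simply opens with ``Let $I \models S$'' and proceeds in three lines.

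You are right that the corollary, as literally stated, fails when $S$ has no model (then $S \models \sim\Phi$ and $S \models \Phi$ both hold vacuously), so the gap you spotted is real and is present in the paper's own argument, not a defect in your reading. Where your proposal does go wrong is in the attempted repair: you justify the existence of a model of $S$ by ``$S$ is consistent, hence satisfiable by the contrapositive of Lemma~\ref{lem_ContradictionRule} combined with Theorem~\ref{theo_sound}''. First, consistency of $S$ is not among the stated hypotheses. Second, and more seriously, soundness (Theorem~\ref{theo_sound}) only yields \emph{satisfiable $\Rightarrow$ consistent}; the direction \emph{consistent $\Rightarrow$ satisfiable} is precisely the content of the completeness theorem, and Corollary~\ref{cor_negation} sits on the path to that theorem (it feeds Lemmas~\ref{lem_essen} and~\ref{lem_C}), so invoking it here would be circular. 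The clean fix is the one you also mention in passing: add ``$S$ is satisfiable'' as an explicit hypothesis. With that amendment the one-line argument --- yours and the paper's --- is complete.
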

\begin{proof}
    Let  $I \models S $. Since $S \models \sim \Phi $ it follows that $I \models \sim \Phi$.
    This implies  that   $I \not \models \Phi $. 
    We conclude that $S  \not \models \Phi $.
\end{proof}

\begin{lem}
    \label{lem_essen}
    Let $S$ be a consistent set such that  $S \not  \vdash^r \Phi$ and $S  \not  \vdash^r \sim \Phi$ then   $S \cup \{\Phi\}$ is also consistent.
\end{lem}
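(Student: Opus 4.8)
The plan is to argue by contraposition: assume $S \cup \{\Phi\}$ is inconsistent, i.e.\ $S, \Phi \vdash^r \bot$, and derive that $S \vdash^r \sim \Phi$, contradicting the hypothesis. Since CPPL resolution has only Rule~I and Rule~II, any derivation of $\bot$ from $S \cup \{\Phi\}$ ultimately produces the empty formula with constraint type $(\geq, 1)$ by a chain of Rule~II applications starting from sentences in $D(S \cup \{\Phi\})$. The idea is to ``factor out'' the single extra sentence $\Phi$ from this derivation. Concretely, I would track how $\Phi = \varphi^{(\geq,n)}$ participates: every sentence derivable from $S \cup \{\Phi\}$ but not from $S$ alone must, by the structure of Rule~II, have used $\Phi$ as one of its premises at some point, and Rule~II combines $(\varphi_1 + \beta)^{(\geq,n_1)}$ with $(\varphi_2 + \neg\beta)^{(\geq,n_2)}$ additively. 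So if $\bot = (\,)^{(\geq,1)}$ is derived, the accumulated contribution of the $\Phi$-uses can be collected into a single residual sentence, and what $S$ alone derives is a sentence whose ``complement'' is exactly $\Phi$ — that is, $S \vdash^r \sim \Phi$.

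The key steps, in order: (1) Record that inconsistency of $S \cup \{\Phi\}$ means $S, \Phi \vdash^r \bot$. (2) Use the soundness theorem (Theorem~\ref{theo_sound}) together with compactness-style reasoning on the finite formal proof to reduce to a finite subset of $S$ if convenient, though this may not be strictly needed. (3) The main move: show that if $S, \Phi \vdash^r \bot$ and $S \not\vdash^r \bot$ (which holds since $S$ is consistent), then the derivation of $\bot$ genuinely uses $\Phi$, and by the additivity of Rule~II one can ``divide through'' to obtain $S \vdash^r \Psi$ where $\Psi$ is precisely the sentence that, added to $\Phi$ via Rule~II, yields $(\,)^{(\geq,1)}$. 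By Theorem~\ref{theo:negation}, this $\Psi$ is $\sim\Phi$ (up to semantic equivalence and the standard-form representation). (4) Conclude $S \vdash^r \sim\Phi$, contradicting the hypothesis $S \not\vdash^r \sim\Phi$. Hence $S \cup \{\Phi\}$ is consistent.

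Alternatively — and this may be cleaner given what is already available — I would mimic the classical deduction-theorem argument using the derived rules already established: Lemma~\ref{lem_ConstantRule} (Constant Rule), Lemma~\ref{lem_ContradictionRule} (Contradiction Rule), Corollary~\ref{cor_Union} (Union Rule), and Corollary~\ref{cor_mono}. If $S, \Phi \vdash^r \bot$, the goal is to obtain $S \vdash^r \sim\Phi$; since $\sim\Phi$ is itself a single standard sentence of the form $(\neg\varphi)^{(\geq,|\varphi|-n+1)}$, one shows that the assumption $S \not\vdash^r \sim\Phi$ plus $S \not\vdash^r \Phi$ leaves room to extend: the Rule~II step that would close the derivation on $\Phi$ can instead be read as deriving $\sim\Phi$ from $S$, because the only way $\Phi$ contributed to $\bot$ is by being resolved against something $S$ already proves.

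The hard part will be step~(3): making rigorous the claim that the contribution of $\Phi$ in a Rule~II derivation chain can be isolated and that ``dividing through'' yields exactly $\sim\Phi$ rather than some weaker sentence. This requires a careful induction on the length of the formal proof of $S, \Phi \vdash^r \bot$, keeping track of the coefficient bookkeeping $(n_1 + n_2 - |\beta|)$ in Rule~II and invoking Theorem~\ref{theo:negation} to identify the residual constraint bound as exactly $|\varphi| - n + 1$. The monotonicity corollaries (Corollary~\ref{cor_mono}, Corollary~\ref{cor_Monotonicity}) and the Union Rule will be used freely to recombine intermediate derivations, and the consistency of $S$ is what guarantees the isolated $\Phi$-contribution is nonempty, so that the division is well-defined.
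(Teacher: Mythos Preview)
Your plan diverges sharply from the paper's, and the divergence is exactly where the difficulty sits.

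The paper's proof is a four-line \emph{semantic} detour through soundness, with no induction on derivations at all: from $S,\Phi\vdash^r\bot$ it obtains $S,\Phi\vdash^r\sim\Phi$ via the Constant Rule (Lemma~\ref{lem_ConstantRule}); soundness (Theorem~\ref{theo_sound}) converts this to $S,\Phi\models\sim\Phi$; then Corollary~\ref{cor_negation} yields $S,\Phi\not\models\Phi$, which contradicts the triviality $S,\Phi\models\Phi$. Notice that the hypotheses $S\not\vdash^r\Phi$ and $S\not\vdash^r\sim\Phi$ (and even the consistency of $S$) are never invoked in the paper's argument.

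Your target, by contrast, is the purely \emph{syntactic} implication $S,\Phi\vdash^r\bot\ \Rightarrow\ S\vdash^r\sim\Phi$. That statement is essentially one direction of Lemma~\ref{lem_C}, which the paper proves \emph{after} and \emph{using} the present lemma --- so you would need an independent argument here, and your step~(3) is where it must live. The sketch you offer (isolate the contributions of $\Phi$ along a Rule~II chain and read off the residual as $\sim\Phi$) does not close: $\Phi$ may be consumed several times with different partners $\beta$ and different coefficient splits $n_1+n_2-|\beta|$, so the accumulated residual need not collapse to a single sentence with bound exactly $|\varphi|-n+1$, and nothing among Corollaries~\ref{cor_mono}, \ref{cor_Monotonicity}, \ref{cor_Union} or Lemmas~\ref{lem_ConstantRule}, \ref{lem_ContradictionRule} supplies a cut-elimination or deduction-theorem step to bridge this. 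The paper sidesteps the whole issue by passing to $\models$ via soundness; you should do the same rather than attempt the proof-theoretic factoring.
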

\begin{proof}
    Assume that $S \cup \{\Phi\}$ is inconsistent. 
    This implies the derivation  $S, \Phi  \vdash^r \bot$. 
    From Lemma \ref{lem_ConstantRule} it follows that $S, \Phi \vdash^r \sim \Phi$. 
   By Theorem \ref{theo_sound}, this implies the consequence  $S, \Phi \models \sim \Phi$.
   Applying corollary \ref{cor_negation},  we conclude that $S, \Phi \not \models \Phi$ which is a contradiction.
 \end{proof}

\begin{lem}
    \label{lem_C}
    $S \not \vdash^r \Phi$ iff $S, \sim \Phi \not \vdash^r \bot$.
\end{lem}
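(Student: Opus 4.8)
The plan is to prove the biconditional by establishing each direction separately, using the contradiction rule (Lemma~\ref{lem_ContradictionRule}) and the constant rule (Lemma~\ref{lem_ConstantRule}) as the main tools.

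\textbf{Forward direction.} Suppose $S, \sim \Phi \vdash^r \bot$; I want to derive $S \vdash^r \Phi$. From $S, \sim \Phi \vdash^r \bot$ and the constant rule (Lemma~\ref{lem_ConstantRule}), applied over the sentence set $S \cup \{\sim\Phi\}$, we get $S, \sim \Phi \vdash^r \Phi$. On the other hand, Rule~I gives $S, \Phi \vdash^r \Phi$ trivially (since $\{\Phi\} \subseteq S \cup \{\Phi\}$). At this point I would mimic the Negation Rule derivation illustrated in the example with Table~\ref{Ta_Lemma_C_Back}: from $S, \sim\Phi \vdash^r \Phi$ and $S, \Phi \vdash^r \Phi$ one concludes $S \vdash^r \Phi$. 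The subtlety is that the Negation Rule is \emph{not} one of the two primitive rules of CPPL resolution, so I cannot invoke it directly; I expect this to be the main obstacle. I would instead argue by contraposition using the machinery already developed: assume $S \not\vdash^r \Phi$; then by Lemma~\ref{lem_essen} (with the roles set up so that either $S \not\vdash^r \sim\Phi$, giving consistency of $S \cup \{\Phi\}$, or $S \vdash^r \sim\Phi$, which by soundness and Corollary~\ref{cor_negation} forces $S \not\vdash^r \Phi$ anyway) one shows $S \cup \{\sim\Phi\}$ is consistent, i.e. $S, \sim\Phi \not\vdash^r \bot$, contradicting the hypothesis.

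\textbf{Backward direction.} Suppose $S \vdash^r \Phi$; I want $S, \sim \Phi \vdash^r \bot$. By the Rule of Monotonicity (which follows from Rule~I and Corollary~\ref{cor_Monotonicity}, since $S \subseteq S \cup \{\sim\Phi\}$ gives $D(S) \subseteq D(S \cup \{\sim\Phi\})$), from $S \vdash^r \Phi$ we obtain $S, \sim\Phi \vdash^r \Phi$. Also Rule~I gives $S, \sim\Phi \vdash^r \sim\Phi$. By the Union Rule (Corollary~\ref{cor_Union}), $S, \sim\Phi \vdash^r \Phi, \sim\Phi$. Now the Contradiction Rule (Lemma~\ref{lem_ContradictionRule}) yields $S, \sim\Phi \vdash^r \bot$, as desired. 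This direction is the routine one.

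I expect the cleanest write-up to lean on the contrapositive formulation throughout, since the primitive system lacks an explicit negation/case-split rule: the forward direction becomes ``if $S, \sim\Phi \not\vdash^r \bot$ then $S \not\vdash^r \Phi$'' is false as stated, so care is needed — actually the contrapositive of the forward direction is ``$S \vdash^r \Phi \Rightarrow S, \sim\Phi \vdash^r \bot$'', which is the backward direction, and vice versa, so the two contrapositives just swap the two directions. Hence the honest approach is: prove the backward direction directly via Union Rule plus Contradiction Rule as above, and prove the forward direction by the contrapositive argument invoking Lemma~\ref{lem_essen} together with soundness (Theorem~\ref{theo_sound}) and Corollary~\ref{cor_negation} to rule out the case $S \vdash^r \sim\Phi$. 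The delicate point to get right is the case analysis inside Lemma~\ref{lem_essen}'s hypotheses, ensuring that whenever $S \not\vdash^r \Phi$ we genuinely land in a situation where $S \cup \{\sim\Phi\}$ is consistent.
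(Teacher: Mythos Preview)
Your backward direction is correct and is exactly what the paper does: from $S \vdash^r \Phi$ use $D(S)\subseteq D(S\cup\{\sim\Phi\})$ (Corollary~\ref{cor_Monotonicity}) together with Rule~I and the Union Rule to get $S,\sim\Phi \vdash^r \Phi,\sim\Phi$, then apply the Contradiction Rule.

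The forward direction, however, has a real gap in the case analysis you propose. You split on whether $S \vdash^r \sim\Phi$ or not, but neither branch reaches the goal as written. In the branch $S \not\vdash^r \sim\Phi$ you say Lemma~\ref{lem_essen} yields ``consistency of $S\cup\{\Phi\}$'' --- that is the wrong set; you need $S\cup\{\sim\Phi\}$ consistent, which requires applying Lemma~\ref{lem_essen} with $\sim\Phi$ in the role of $\Phi$ (so you would need \emph{both} $S\not\vdash^r\sim\Phi$ and $S\not\vdash^r\Phi$, the latter being $\sim\sim\Phi\equiv\Phi$). In the branch $S \vdash^r \sim\Phi$ you merely observe that soundness and Corollary~\ref{cor_negation} give back $S \not\vdash^r \Phi$, which was your starting hypothesis; nothing is concluded about the consistency of $S\cup\{\sim\Phi\}$, so this case is simply not handled.

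The paper avoids the case split altogether. It argues by contradiction: assume $S\not\vdash^r\Phi$ \emph{and} $S,\sim\Phi\vdash^r\bot$. From the second assumption the Constant Rule gives $S,\sim\Phi\vdash^r\Phi$, and soundness gives $S,\sim\Phi\models\Phi$; since every model of $S$ satisfies either $\Phi$ or $\sim\Phi$, this forces $S\models\Phi$, hence by Corollary~\ref{cor_negation} $S\not\models\sim\Phi$, and by soundness $S\not\vdash^r\sim\Phi$. Now \emph{both} hypotheses of Lemma~\ref{lem_essen} (instantiated with $\sim\Phi$) are in hand, yielding that $S\cup\{\sim\Phi\}$ is consistent, contradicting $S,\sim\Phi\vdash^r\bot$. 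In short: the extra assumption $S,\sim\Phi\vdash^r\bot$ is precisely what lets you \emph{derive} $S\not\vdash^r\sim\Phi$, so no case analysis is needed.
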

\begin{proof} 
    Let  
            \begin{equation}
    \label{lem_C_eq1}
    S \not \vdash^r \Phi
    \end{equation}
     and suppose that 
        \begin{equation}
    \label{lem_C_eq2}
    S, \sim \Phi  \vdash^r \bot.
    \end{equation}
    By Lemma \ref{lem_ConstantRule}, this implies $S, \sim \Phi  \vdash^r \Phi$.
    From Theorem \ref{theo_sound}, it follows that $S, \sim \Phi  \models \Phi$ which implies $S \models \Phi$. By corollary \ref{cor_negation}, that is $S \not \models \sim \Phi$.
    From Theorem \ref{theo_sound} we conclude that 
    \begin{equation}
    \label{lem_C_eq3}
    S \not \vdash^r \sim \Phi.
    \end{equation}
    Applying Lemma \ref{lem_essen} to (\ref{lem_C_eq1}) and (\ref{lem_C_eq3}) gives the consistency of the set  $S \cup \{\sim \Phi\}$. That is $S , \sim \Phi  \not \vdash^r  \bot$, which contradicts the derivation of (\ref{lem_C_eq2}).

    To prove the opposite direction let  
    \begin{equation}
    \label{lem_C_eq4}
    S, \sim \Phi \not \vdash^r \bot
    \end{equation}
    and assume that $ S  \vdash^r  \Phi.$ 
    This means that 
        \begin{equation}
    \label{lem_C_eq5}
   \Phi \in D(S) \subseteq D(S \cup \{\sim \Phi\})
    \end{equation}
    On the other hand, $S, \sim \Phi  \vdash^r  \sim \Phi$. Thus
            \begin{equation}
    \label{lem_C_eq6}
   \sim  \Phi \in  D(S \cup \{\sim \Phi\})
    \end{equation}
    From (\ref{lem_C_eq5}) and (\ref{lem_C_eq6}) we conclude that $ S, \sim \Phi  \vdash^r \Phi , \sim \Phi$ or $ S, \sim \Phi  \vdash^r \bot$ which contradicts (\ref{lem_C_eq4}). 
\end{proof}

Consequently, $S \not  \vdash^r \Phi$ is equivalent to the consistency of $S':= S \cup \{\sim \Phi\}$, and $S \not  \models \Phi$ is equivalent to the satisfiability of $S'$.
Thus, to prove the claim  $\models \; \subseteq \; \vdash^r $, which is equivalent to $S \not  \vdash^r \Phi \Rightarrow S \not \models \Phi$, we only need to show that consistent sets of sentences are satisfiable. 
In order to accomplish this we need the following lemma,  whose proof can be achieved as a consequence  of Zorns's lemma\footnote{ {\bf Zorns's lemma} If every chain in a nonempty partially ordered set $S$ is bounded then $S$ has a maximal element.}, which is one of the basic mathematical tools \cite{Lan93}.

\begin{lem}
    \label{lem_consExten}
    Every consistent set of sentences $S \subseteq \mathcal{S}$ can be extended to a maximally consistent set $S' \supseteq S$.
\end{lem}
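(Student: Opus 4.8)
The plan is to apply Zorn's lemma to the collection $\mathcal{M}$ of all consistent sets of sentences that contain $S$, partially ordered by inclusion. This set is nonempty since $S$ itself belongs to $\mathcal{M}$. To invoke Zorn's lemma I must verify that every chain in $\mathcal{M}$ has an upper bound inside $\mathcal{M}$; a maximal element of $\mathcal{M}$ is then, by definition, a consistent set containing $S$ that is properly contained in no larger consistent set, i.e.\ a maximally consistent $S' \supseteq S$, which is exactly what the lemma asserts.

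So let $\mathcal{C} = \{S_i\}_{i \in I}$ be a chain in $\mathcal{M}$ (totally ordered by $\subseteq$), and set $S^* = \bigcup_{i \in I} S_i$. Clearly $S \subseteq S^*$ and $S^*$ is an upper bound for the chain, so the only real content is to show $S^*$ is consistent, that is, $S^* \not\vdash^r \bot$. First I would handle the degenerate case where $\mathcal{C}$ is empty by taking $S$ itself as the bound. Otherwise I argue by contradiction: suppose $S^* \vdash^r \bot$. By definition of $\vdash^r$, there is a formal proof of $S^* \vdash^r \bot$, which is a \emph{finite} sequence of statements; hence only finitely many sentences of $S^*$ are actually used — more precisely, the derivation establishes $S_0 \vdash^r \bot$ for some finite $S_0 \subseteq S^*$ (this "finite support" of a derivation is what must be extracted from the structure of CPPL resolution proofs, and is the step I expect to need the most care). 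Each of the finitely many sentences in $S_0$ lies in some member of the chain, and since the chain is totally ordered, there is a single index $j$ with $S_0 \subseteq S_j$. By the Rule of Monotonicity direction already available through Corollary~\ref{cor_Monotonicity} (if $S_0 \subseteq S_j$ then $D(S_0) \subseteq D(S_j)$, so $\bot \in D(S_0) \subseteq D(S_j)$), we get $S_j \vdash^r \bot$, contradicting $S_j \in \mathcal{M}$. Therefore $S^*$ is consistent and belongs to $\mathcal{M}$.

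With the chain condition verified, Zorn's lemma yields a maximal element $S'$ of $\mathcal{M}$. By construction $S \subseteq S'$ and $S'$ is consistent; maximality in $\mathcal{M}$ means no consistent superset of $S$ properly contains $S'$, and since any consistent superset of $S'$ is automatically a consistent superset of $S$, every $S'' \supsetneq S'$ is inconsistent. Hence $S'$ is maximally consistent, completing the proof. The main obstacle is the finiteness argument: one must be sure that a CPPL-resolution derivation of $\bot$ invokes only finitely many premises from the ambient set, so that the chain's total order can be used to localize the contradiction to a single $S_j$. This hinges on the fact — implicit in the definition of "formal proof" as a finite sequence and in the shape of Rule~I and Rule~II — that each line's justification references only finitely much prior data; formalizing this (e.g.\ by induction on the length of the derivation, tracking the finite set of "input" sentences introduced via Rule~I) is the one place where genuine bookkeeping is required, whereas everything else is a routine packaging of Zorn's lemma.
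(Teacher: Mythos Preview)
Your proposal is correct and follows exactly the approach the paper indicates: the paper does not spell out a proof at all but simply states that the lemma ``can be achieved as a consequence of Zorn's lemma,'' and your argument is precisely the standard Zorn's-lemma construction with the details (chain upper bound via union, finite character of $\vdash^r$) filled in. If anything, you are supplying the very bookkeeping the paper omits, so there is nothing to contrast.
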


\begin{lem}
    \label{lem_MCS}
    A maximally consistent set $S \subseteq \mathcal{S}$ is satisfiable.
\end{lem}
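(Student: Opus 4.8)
## Proof Proposal for Lemma \ref{lem_MCS}

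The plan is to build an interpretation $I$ directly from the maximally consistent set $S$ and then verify that $I \models S$. The natural candidate is to read off a propositional assignment from $S$: for each propositional variable $p \in \mathcal{P}$, decide whether $p \in I$ according to which of the two ``unit'' sentences $p^{(\geq,\,1)}$ or $(\neg p)^{(\geq,\,1)}$ belongs to $S$ (equivalently, is derivable from $S$). First I would show that for each $p$, exactly one of these two sentences is in $S$: at least one, because if neither were derivable then by Lemma \ref{lem_essen} we could consistently adjoin $p^{(\geq,\,1)}$, contradicting maximality; at most one, because $p^{(\geq,\,1)}$ and $(\neg p)^{(\geq,\,1)}$ together yield $\bot$ via Rule II (take $\varphi_1=\varphi_2$ empty, $\beta = p$, $n_1=n_2=1$, so the conclusion is $(\,)^{(\geq,\,1)} = \bot$), contradicting consistency. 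So define $I := \{\, p \in \mathcal{P} : p^{(\geq,\,1)} \in S \,\}$.

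Next I would establish the key claim: for every sentence $\Phi \in \mathcal{S}$, $\Phi \in S$ if and only if $I \models \Phi$ — or at least the direction $\Phi \in S \Rightarrow I \models \Phi$, which is all that is strictly needed. Working with the standard form $\Phi = \bigl(\sum_{i=1}^{k} n_i \lambda_i\bigr)^{(\geq,\,n)}$, the semantics gives $I(\sum n_i \lambda_i) = \bigl(\sum_{i : I \models \lambda_i} n_i,\ \sum_{i : I \not\models \lambda_i} n_i\bigr)$, so $I \models \Phi$ amounts to the arithmetic inequality $\sum_{i : I \models \lambda_i} n_i \geq n$. The idea is: for each literal $\lambda_i$ with $I \models \lambda_i$, the corresponding unit sentence $\lambda_i^{(\geq,\,1)}$ is in $S$, hence (scaling, which should follow from repeated use of the Addition machinery, or more carefully from Rule II with empty $\beta$ applied to copies) $\lambda_i^{(\geq,\,n_i)}$ is derivable from $S$; for each $\lambda_i$ with $I \not\models \lambda_i$ we have $(\neg\lambda_i)^{(\geq,\,1)} \in S$. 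Now suppose toward a contradiction that $I \not\models \Phi$, i.e. $\sum_{i : I \models \lambda_i} n_i \leq n-1$. Then resolving $\Phi$ (using Rule II) successively against the unit sentences $(\neg\lambda_i)^{(\geq,\,1)}$ for each index $i$ with $I \not\models \lambda_i$ — each such step cancels the pseudo-literal $n_i\lambda_i$ against $n_i$ copies of $(\neg\lambda_i)^{(\geq,\,1)}$ and drops the threshold by $n_i$ — produces a derivation of $\bigl(\sum_{i : I \models \lambda_i} n_i \lambda_i\bigr)^{(\geq,\, n - \sum_{i : I \not\models \lambda_i} n_i)}$ from $S$. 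Since $\Phi \in S$ and all the unit sentences are in $S$, this is derivable from $S$. Then resolving further against the $\lambda_i^{(\geq,\,n_i)}$ for the indices with $I \models \lambda_i$ reduces the whole left side to the empty formula with threshold $n - \sum_{i : I \not\models \lambda_i} n_i - \sum_{i : I \models \lambda_i} n_i = n - \sum_i n_i$, wait — that is not quite the contradiction I want, so the cleaner route is: after cancelling only the ``false'' literals we have $S \vdash^r \bigl(\sum_{i : I \models \lambda_i} n_i \lambda_i\bigr)^{(\geq,\, n')}$ with $n' = n - \sum_{i:I\not\models\lambda_i} n_i$; the total coefficient sum available on the left is $\sum_{i : I \models \lambda_i} n_i$, and a sentence whose threshold strictly exceeds its coefficient sum is unsatisfiable, hence (by soundness, Theorem \ref{theo_sound}) would force $S \vdash^r \bot$ — this happens precisely when $n' > \sum_{i : I \models \lambda_i} n_i$, i.e. $n > \sum_i n_i$… which need not hold. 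Therefore the actual contradiction must come from cancelling against the \emph{true} units as well: once $n' > 0$ we can always produce $\bot$ by resolving $\bigl(\sum_{i:I\models\lambda_i} n_i\lambda_i\bigr)^{(\geq,n')}$ with $\bigl(\sum_{i:I\models\lambda_i} n_i \neg\lambda_i\bigr)^{(\geq, N-n'+1)}$ once we know the latter is derivable — but it is not. So the correct, simplest argument is: whenever $I \not\models \Phi$, the sentence $\sim\Phi = (\neg\varphi)^{(\geq,\,N-n+1)}$ (Theorem \ref{theo:negation}) is \emph{satisfied} by $I$, and by the same literal-by-literal reasoning its underlying inequality is witnessed by the units in $S$, making $\sim\Phi$ derivable from $S$; combined with $\Phi \in S$, Lemma \ref{lem_ContradictionRule} gives $S \vdash^r \bot$, contradicting consistency. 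Hence $I \models \Phi$.

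Finally, applying the key claim to every $\Phi \in S$ immediately yields $I \models S$, so $S$ is satisfiable, completing the proof.

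I expect the main obstacle to be making the ``literal-by-literal'' reduction rigorous: one must show that from the unit facts $\lambda_i^{(\geq,\,1)} \in S$ (for true literals) and $(\neg\lambda_i)^{(\geq,\,1)} \in S$ (for false literals) one can \emph{derive} the scaled/assembled sentence $(\sum_{i : I \models \lambda_i} n_i\lambda_i)^{(\geq,\, \sum_{i : I \models \lambda_i} n_i)}$ purely through Rule II. This requires a small lemma that Rule II with empty $\beta$ reproduces the Addition Rule (so unit sentences can be summed and scaled), together with careful bookkeeping of thresholds; once that is in hand, the contradiction with $\Phi \in S$ via the Contradiction Rule (Lemma \ref{lem_ContradictionRule}) closes the argument cleanly. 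The secondary subtlety is the well-definedness of $I$, which as noted follows from Lemma \ref{lem_essen} and maximality for existence and from Rule II for uniqueness.
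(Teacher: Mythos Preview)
Your route is genuinely different from the paper's. The paper argues by contradiction without ever building a model: it assumes $S$ unsatisfiable, picks for a given $I$ some $\Phi\in S$ with $I\not\models\Phi$, and splits on whether $\sim\Phi\in S$, invoking Lemmas~\ref{lem_ContradictionRule} and~\ref{lem_C} to contradict either consistency or maximality. Your canonical-model construction is the more classical (and more informative) strategy and is essentially correct, but the execution has a real arithmetic slip that sends you down an unnecessary detour.

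The slip is in ``each such step \dots drops the threshold by $n_i$.'' It does not. If you resolve $(\varphi+n_i\lambda_i)^{(\geq,m)}$ against $(n_i\neg\lambda_i)^{(\geq,n_i)}$ via Rule~II with $\beta=n_i\lambda_i$, the conclusion is $\varphi^{(\geq,\,m+n_i-|\beta|)}=\varphi^{(\geq,\,m)}$: the threshold stays at $m$. Doing this for all indices $i$ with $I\not\models\lambda_i$ (after building $(\sum_{i:I\not\models\lambda_i} n_i\neg\lambda_i)^{(\geq,\,M)}$, $M=\sum_{i:I\not\models\lambda_i}n_i$, by repeated Rule~II with empty $\beta$) therefore yields
\[
S\;\vdash^r\;\Bigl(\textstyle\sum_{i:I\models\lambda_i} n_i\lambda_i\Bigr)^{(\geq,\,n)},
\]
not threshold $n-M$. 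But this is exactly what you want: under the hypothesis $I\not\models\Phi$ the coefficient sum on the left is $\sum_{i:I\models\lambda_i} n_i<n$, so the derived sentence is unsatisfiable, i.e.\ it \emph{is} $\bot$ by the paper's convention. That already contradicts consistency of $S$, and your first attempt---the one you abandoned---closes immediately once the arithmetic is fixed.

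By contrast, your fallback plan of deriving $\sim\Phi$ itself from the units is shakier than you indicate. The units for the false literals give you $\bigl(\sum_{i:I\not\models\lambda_i}n_i\neg\lambda_i\bigr)^{(\geq,M)}$, which has both \emph{fewer} literals and a \emph{higher} threshold than $\sim\Phi=(\sum_i n_i\neg\lambda_i)^{(\geq,\,N-n+1)}$. Padding in the missing $n_j\neg\lambda_j$ with threshold $0$ requires having the tautologies $(n_j\neg\lambda_j)^{(\geq,0)}$ available, which you have not justified via Rule~II alone; and Rule~II never lowers a threshold when adding a nonnegative-threshold sentence. So that route needs extra work (e.g.\ first arguing that maximally consistent sets are deductively closed and contain all tautologies), whereas the corrected first route needs none.

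In short: keep your model construction and the well-definedness argument for $I$ (both fine), fix the threshold bookkeeping, and stop as soon as you obtain a derived sentence whose threshold exceeds its coefficient sum---that is already $\bot$. This gives a clean, constructive proof that is different from, and arguably more transparent than, the paper's case split.
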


\begin{proof}
    Assume that $S$ is unsatisfiable. 
    It follows that  for any interpretation $I$ there exists a sentence $\Phi \in S$ such that $I  \not \models \Phi$. 
    Thus $I  \models \sim \Phi$ To this end, we have two cases.
    Case 1: $\sim \Phi \in S $. It follows from the initial rule that $S \vdash^r \sim \Phi$.
    Since $\Phi \in S$ it also follows that $S \vdash^r \Phi$. 
    Applying Corollary \ref{cor_Union} on $ S \vdash^r \sim \Phi $ and  $S \vdash^r \Phi$ results in $S\vdash^r \Phi, \sim \Phi$. 
    By Lemma \ref{lem_ContradictionRule} this implies that $S \vdash^r \bot$, which contradict the consistency of $S$.
    Case 2: $\sim \Phi \not \in S $. This means that $ S \subset S':= S \cup \{\sim \Phi\} $. 
    From $\sim \Phi \not \models \Phi$ it follows that $S' \not \models \Phi$. This implies that $S' \not \vdash^r \Phi$.
    By Lemma \ref{lem_C} this is equivalent to the consistency of $S' \cup \{\sim\Phi \} = S'$, which contradicts the maximality of $S$. 
\end{proof}

Lemmas \ref{lem_consExten} and \ref{lem_MCS} facilitate our way to obtain the most significant result of this work, namely the completeness theorem.
%------------------------    
\begin{thm}[Completeness theorem]
    For any $S, S' \in \mathcal{S}$, if $S \models S' $ then $  S \vdash^r S'$. 
\end{thm}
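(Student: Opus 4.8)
The plan is to reduce the general statement $S \models S' \Rightarrow S \vdash^r S'$ to the single-sentence case and then apply the machinery already assembled. First I would observe that, by the definition of $\vdash^r$ together with Rule~I and the Union Rule (Corollary~\ref{cor_Union}), the derivation $S \vdash^r S'$ holds if and only if $S \vdash^r \Phi$ for every sentence $\Phi \in S'$; likewise $S \models S'$ holds iff $S \models \Phi$ for every $\Phi \in S'$. Hence it suffices to prove the implication $S \models \Phi \Rightarrow S \vdash^r \Phi$ for an arbitrary single sentence $\Phi$.

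Next I would argue the contrapositive: assume $S \not\vdash^r \Phi$ and derive $S \not\models \Phi$. By Lemma~\ref{lem_C}, $S \not\vdash^r \Phi$ is equivalent to the consistency of $S' := S \cup \{\sim \Phi\}$. By Lemma~\ref{lem_consExten}, $S'$ extends to a maximally consistent set $S'' \supseteq S'$, and by Lemma~\ref{lem_MCS} such an $S''$ is satisfiable, so there is an interpretation $I$ with $I \models S''$. Since $S \subseteq S''$ and $\sim\Phi \in S''$, we get $I \models S$ and $I \models \sim\Phi$, the latter giving $I \not\models \Phi$. Therefore $S \not\models \Phi$, which is exactly what the contrapositive requires. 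Combining this with the reduction of the first paragraph yields $S \models S' \Rightarrow S \vdash^r S'$.

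The genuinely substantive content of the argument lives in the lemmas that are invoked rather than in the theorem itself, and in particular the step I expect to be the main obstacle is the passage from a consistent set to a satisfiable one — that is, the combination of Lemmas~\ref{lem_consExten} and~\ref{lem_MCS}. The extension lemma relies on Zorn's lemma and on the fact that consistency is preserved under unions of chains, which in turn needs that $\vdash^r$ derivations of $\bot$ use only finitely many premises (a compactness-type observation that matters precisely because CPPL permits infinite sets of sentences). The model-existence lemma for maximally consistent sets is where one must actually produce the interpretation $I \subseteq \mathcal{P}$ and verify it satisfies every sentence; in the excerpt this is handled indirectly via Lemma~\ref{lem_C}, but the delicate point is showing that the candidate truth assignment extracted from $S$ genuinely respects each pseudo-Boolean constraint $\varphi^{(\geq,n)}$, which is where the Resolution Rule (Rule~II) and the earlier Resolution and negation theorems do the real work. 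Once those lemmas are in hand, the completeness theorem follows by the short chain of equivalences above with no further calculation.
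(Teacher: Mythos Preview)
Your proposal is correct and follows essentially the same route as the paper: argue the contrapositive, pick a single $\Phi\in S'$ with $S\not\vdash^r\Phi$, use Lemma~\ref{lem_C} to get consistency of $S\cup\{\sim\Phi\}$, extend via Lemma~\ref{lem_consExten}, and invoke Lemma~\ref{lem_MCS} to obtain a model witnessing $S\not\models\Phi$. The only cosmetic difference is that you spell out the reduction to a single sentence explicitly (via Rule~I and the Union Rule), whereas the paper compresses this into the opening line ``there exists $\Phi\in S'$ such that $S\not\vdash^r\Phi$''.
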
  
\begin{proof}
      Let us assume that $S \not \vdash^r S'$. It means that there exists $\Phi \in  S'$ such that  $S \not \vdash^r \Phi$.
      But this implies the consistency of $S \cup \{\sim \Phi \}$ that can be, according to Lemma \ref{lem_consExten}, extended to a maximally consistent set of sentences, say  $S''$. 
      By Lemma \ref{lem_MCS}, $S''$ is satisfiable, hence $S \cup \{\sim \Phi \}$ is also satisfiable. 
      Thus $S \not \models \Phi$. And since $\Phi \in S'$ it follows that $S \not \models S'$. 
\end{proof}

\section{Conclusion}
In this work, the well-known proof system of propositional resolution has been utilized for CPPL.
The resultant CPPL resolution can be viewed as a generalization of propositional resolution. 
This is justified by the fact that every CNF formula, which is a finite set of clauses, can be represented equivalently by a set of sentences in CPPL \cite{DBLP:conf/cade/Azizi-Sultan18}.
Additionally, CPPL does not require the corresponding set of sentences to be finite. 
Although such a restriction is made dispensable, this work has provided us with a constructive proof showing that the CPPL resolution is sound and complete. One can conclude, as a marginal consequence of this result,  that propositional resolution is maintained to be sound and complete even for formulas with infinite set of clauses.
%%%%%%%%%%%%%%%%%
\subsection*{Acknowledgment}
I would like to thank Prof. Steffen H\"olldobler, TU Dresden, Germany. I have learned from him how to rationalize logically rather than just thinking mathematically.  Without his influence this work would not have been initiated.

\bibliographystyle{spmpsci}      % mathematics and physical sciences
\bibliography{reference}   % name your BibTeX data base
\end{document}